\newenvironment{lemma}{\smallskip\begin{trivlist}
 \item[\hspace{\labelsep}{\noindent\bf Lemma.}]\it
 }{\end{trivlist}\smallskip}
\newenvironment{propo}{\smallskip\begin{trivlist}
 \item[\hspace{\labelsep}{\noindent\bf Proposition.}]\it
 }{\end{trivlist}\smallskip}
\newenvironment{theor}{\smallskip\begin{trivlist}
 \item[\hspace{\labelsep}{\noindent\bf Theorem.}]\it
 }{\end{trivlist}\smallskip}
\newenvironment{coro}{\smallskip\begin{trivlist}
 \item[\hspace{\labelsep}{\noindent\bf Corollary.}]\it
 }{\end{trivlist}\smallskip}
\newcommand{\aA}{\lower1pt\hbox{$\!_A$}}
\newcommand{\aAm}{\lower1pt\hbox{$\!_{A_m}$}}
\newcommand{\A}{\mathbb{A}}
\newcommand{\D}{\mathbb{D}}
\newcommand{\N}{\mathbb{N}}
\newcommand{\R}{\mathbb{R}}
\newcommand{\Z}{\mathbb{Z}}
\renewcommand{\dim}{\hbox{\rm dim}}
\newcommand{\udim}{\hbox{\bf dim}}
\newcommand{\Ext}{\hbox{\rm Ext}} 
\newcommand{\End}{\hbox{\rm End}}
\newcommand{\Hom}{\hbox{\rm Hom}}
\newcommand{\ind}{\hbox{\rm ind}}
\renewcommand{\mod}{\hbox{\rm mod}}
\newcommand{\supp}{\hbox{\rm supp}\,}
\def\raya{\raise1.5pt\hbox to 25pt{\vrule height1.5pt depth-1pt
           width25pt}}
\newcommand{\bulito}{\ {\scriptstyle \bullet}\, \ }
\def\subsetnoteq{\mathbin{\hbox{$\subseteq \joinrel \hskip-8pt \lower3pt
                 \hbox{$\scriptscriptstyle /$}\ $}}}
\def\aderecha#1{\smash{\mathop{\longrightarrow}\limits^{#1}}}
\def\mapdowndown#1{\Bigg\downarrow \hskip-7pt\lower10pt\hbox{$\downarrow$}
        \rlap{$\vcenter{\hbox{$\scriptstyle#1$}}$}}
\def\vmapdown#1{\raise5pt\hbox{$\sssty\vert$} 
         \hskip-2.25pt \raise17pt\hbox{$\sssty\vert$} 
         \hskip-4.25pt \lower13pt\hbox{$\downarrow$}
         \rlap{$\vcenter{\hbox{$\scriptstyle#1$}}$}}
\def\sssty{\scriptscriptstyle}
\begin{document}
\baselineskip=18pt plus 1 pt

\title[Algebras accepting a maximal root]{Algebras whose Tits form accepts a maximal omnipresent root}
\author[Jos\'e A. de la Pe\~na and Andrzej Skowro\'nski
]{Jos\'e A. de la Pe\~na$^1$ and Andrzej Skowro\'nski$^2$}
\maketitle
\noindent
{\footnotesize $^1$ Instituto de Matem\'aticas, Universidad Nacional Aut\'onoma de M\'exico, M\'exico 04510 D.F., M\'exico. 
and Centro de Investigaci\'on en Matem\'aticas AC, La Valenciana, Guanajuato 36240 Gto. M\'exico
e-mail: jap@matem.unam.mx and jap@cimat.mx\\
$^2$ Faculty for Mathematics and Computer Science, Nicolaus Copernicus University, Chopina 12/18, 87-100 Toru{\'n}, Poland. e-mail: skowron@mat.uni.torun.pl}

\vskip.5cm
{\footnotesize {\bf Abstract:} Let $k$ be an algebraically closed field and $A$ be a finite-dimensional associative basic $k$-algebra of the form $A=kQ/I$ where $Q$ is a quiver without oriented cycles or double arrows and $I$ is an admissible ideal of $kQ$. We consider roots of the Tits form $q_A$, in particular in case $q_A$ is weakly non-negative. We prove that for any maximal omnipresent root $v$ of $q_A$, there exists an indecomposable $A$-module $X$ such that v={\bf dim} X. Moreover, if $A$ is strongly simply connected, the existence of a maximal omnipresent root of $q_A$ implies that $A$ is tame of tilted type.}

{\footnotesize {\bf Key words}: root of the Tits form, weakly non-negative quadratic form, maximal omnipresent root, tilted algebra, tame algebra, exceptional index.\\
2010 MSC: 16G20, 16G60, 16G70. }
\medskip

Let $k$ be an algebraically closed field and $A$ be a finite-dimensional associative basic $k$-algebra with unity. Write $A=kQ/I$ where $Q$ is a finite quiver and $I$ is an admissible ideal of the
path algebra $kQ$, see \cite{1}, \cite{14}. In this work, an $A$-module will always be a finitely generated left $A$-module.  

A fundamental problem in the {\em representation theory of
algebras\/} is the classification of indecomposable $A$-modules
(up to isomorphism).  One of the early successes of modern representation
theory was the identification by Gabriel \cite{13} of the one-to-one correspondence between isoclasses of indecomposable modules and roots of the quadratic form 
$q_Q(x_1,\ldots,x_n)= \sum\limits_{i=1}^n x_i^2 -\sum\limits_{i \to j} x_i x_j$ for the quivers $Q$ whose underlying graph is a Dynkin diagram. Those quivers $Q$ correspond to the representation-finite hereditary algebras $A=kQ$.

Recall that an algebra $A$ is {\em representation-finite} if there are only finitely many indecomposable $A$-modules up to isomorphism. More generally, an algebra $A$ is {\em tame\/} if, for every positive natural number $n$, almost every indecomposable $n$-dimensional $A$-module 
is isomorphic to a module belonging to a finite number of $1$-parametric families of modules. 
A central result of Drozd \cite{12} states that every finite-dimensional $k$-algebra is either tame or wild, the later case meaning that the classification of the indecomposable $A$-modules implies the classification of the indecomposable modules over any other finite-dimensional algebra. 

In general, we shall consider algebras $A=kQ/I$ which are {\em triangular\/}, that is,
$Q$ has no oriented cycles. Following Bongartz \cite{3}, for a triangular algebra $A$, the {\em Tits
quadratic form\/} $q_A\colon \Z^{Q_0}\to \Z$ is defined by
$$q_A(v)=\sum_{i\in Q_0}v(i)^2-\sum_{(i\to j)\in Q_1}v(i)v(j)
+\sum_{i,j\in Q_0}r(i,j)v(i)v(j)$$
where $Q_0$ (resp. $Q_1$) denotes the set of vertices (resp. arrows)
of $Q$ and $r(i,j)$ is the number of elements in $R\cap I(i,j)$
whenever $R$ is a minimal set of generators of $I$ contained in
$\bigcup\limits_{i,j\in Q_0}I(i,j)$. For a representation-finite algebra $A$, it was shown in \cite{3}, that $q_A$ is {\em weakly positive\/}, that is, $q_A(v)>0$ for any vector $0\ne v\in \N^{Q_0}$, and, in case the Auslander-Reiten quiver of $A$ accepts a preprojective component, then there is a one-to-one correspondence $X \to \udim X$ between the isoclasses of indecomposable $A$-modules and the (finitely many) roots of the Tits form $q_A$. For a tame algebra, the first named author showed \cite{25}, that $q_A$ is weakly non-negative, that is, $q_A(v)\ge 0$ for any $v\in \N^{Q_0}$. Although the converse is not true, there are important cases where it holds. For instance, if $A$ is a tilted algebra, Kerner's analysis \cite{20} shows that $A$ is tame exactly when $q_A$ is weakly non-negative, and in that case the dimension vector function yields an injection of the isoclasses of indecomposable directing $A$-modules into the roots of the Tits form $q_A$. Recently, it was shown by Br\"ustle and the authors of this work that a strongly simply connected algebra $A$ is tame if and only if $q_A$ is weakly non-negative, \cite{7}.

In this work we extend some of the above results in the following way. Given a unit form $q(x_1,\ldots,x_n)= \sum\limits_{i=1}^n x_i^2 + \sum_{i \ne j} q_{ij}x_ix_j$, a root $v \in \Z^n$ (that is, $v$ has non-negative coordinates and $q(v)=1$) is {\em omnipresent} if $v(i) > 0$ for every $i=1,\ldots,n$; it is {\em maximal} if any root $w \ge v$ is $w=v$. We denote by $q(-,?)$ the associated symmetric bilinear form. We say that the form $q$ is {\em slender} if $q_{ij} \ge -1$ for every pair $i \ne j$. The first result is fundamental in our context. 

\vskip.5truecm

{\bf Theorem 0.} {\em Let $A$ be a triangular algebra such that $q_A$ is a slender form with a maximal omnipresent root $v$. Then $q_A$ is weakly non-negative. Moreover, $v(i) \le 12$ for every $i \in Q_0$.} 

\vskip.5truecm
The result follows directly from \cite{18} and the corresponding result for the coordinates of maximal omnipresent roots of weakly non-negative unit forms proved in \cite{11}. We review the proof in Section 1. We do not know what is the optimal bound for the coordinates of a maximal omnipresent root of the Tits form of an algebra, we conjecture it is $6$.

The problem of {\em realization} of a root $v$ as the dimension vector $v= \udim X$ of an indecomposable module $X$ is difficult. We shall discuss the problem of realization of maximal omnipresent roots for a class of algebras which includes the schurian tame algebras. 

\vskip.5truecm
{\bf Theorem 1.} {\em Let $A=kQ/I$ be a triangular algebra without double arrows such that $q_A$ accepts a maximal omnipresent root $v$. Then there exists an indecomposable module $X$ such that $v= \udim X$.}

The main argument for the realization of a root $v$ as the dimension vector $v= \udim X$ of an indecomposable module $X$ requires the consideration of algebraic varieties of modules. Indeed, consider the variety of modules $\mod_A(v)$ with the Zariski topology equipped with the action of the algebraic group $G(v)= \prod\limits_{i \in Q_0} GL_k(v(i))$. For every irreducible component $C$ 
of $mod_A (v)$ we get a {\it generic decomposition}  $v=w_1 +\cdots +w_s$ in $C$, for which
$$\{X\in C\colon X=X_1 \oplus \cdots \oplus X_s\ \hbox{with}\
\udim \ X_i=w_i\ \hbox{and}\ X_i\ \hbox{indecomposable}\}$$
\noindent 
contains an open and dense subset of $C$. We prove that under the hypothesis of the Theorem, $s=1$. 

Recall from \cite{34} that a triangular algebra $A$ is {\em strongly simply connected\/} if for every convex subcategory $B$ of $A$ the first Hochschild cohomology $H^1(B)$ vanishes. The relevance of strongly simply connected algebras is associated to the construction of Galois coverings $R \to R/G=A$, for certain finite dimensional algebras $A$, with  strongly simply connected categories $R$ and torsion-free groups $G$ of automorphisms of $R$. The main result of \cite{7} is used to prove the next Theorem.

\vskip.5cm
{\bf Theorem 2.}
{\em Let $A=kQ/I$ be a strongly simply connected algebra such that $q_A$ accepts a maximal omnipresent root $v$. Then $A$ is a tame tilted algebra and there is a one-to-one correspondence $X \to \udim X$ between the isoclasses of indecomposable sincere $A$-modules and the omnipresent roots of $q_A$.}

Following Ringel \cite{31}, we define {\em exceptional indices} for maximal omnipresent roots $v$ of  weakly non-negative unit forms $q(x_1,\ldots,x_n)$. Indeed, we have $q(v,e_i) \ge 0$ for all $i$ and $q(v,e_j) > 0$ for at most $2$ indices $j$. As a consequence of Theorem 1 we shall prove the following generalization of the main Theorem of \cite{22}.

\vskip.5truecm
{\bf Theorem 3.} {\em Let $A$ be a strongly simply connected algebra such that $q_A$ accepts a maximal omnipresent root $v$ with two exceptional indices. 
Then there is a unique indecomposable sincere $A$-module.}

For concepts not explicitely introduced in the paper as well as general background we refer the reader to \cite{2}, \cite{31}, \cite{32} and \cite{33}.

The work for this paper started during the visit of the second named author to the Instituto de Matem\'aticas at UNAM during the spring of 2009.
Both authors acknowledge support from the Consejo Nacional de Ciencia y Technologia of Mexico.
The second author has also been supported by the Research Grant No. N N201 269135 of the Polish Ministry of Science 
and Higher Education.

\section{Unit forms and algebras.}

\subsection{}%1.1
We recall some elementary facts of linear algebra. Let $M = (a_{ij})$ be a symmetric integral $n \times n$-matrix such that $a_{ii}=2$ for $i=1,\ldots,n$.  The corresponding bilinear form $q(x,y)=xMy^t$, for any couple of row vectors $x,y$, satisfies $q(x+y)=q(x) + q(x,y) +q(y)$, where $q$ is the quadratic form associated to
$M$, that is, $q(x) = {1 \over 2} x M x^{t}$. The form $q$ is a {\em unit form}, that is,
it has the shape $q(x_1,\ldots,x_n)=\sum\limits^n_{i=1}x^2_i+\sum\limits_{i<j}q_{ij}x_ix_j$ for some integral numbers $q_{ij}$. Denote by $e_1, \ldots,e_n$ the canonical basis of $\Z^n$, then we get $q(e_i,e_j)=q_{ij}$.

Given an index $i=1, \ldots,n$, the {\em reflection} of $q$ at $i$ is the function $\sigma_{i}(z)=z - q(z,e_i)e_i$ on $\Z^n$. Observe that for a root $v$ of $q$, the reflection $\sigma_i(v)$ is again a root.

%\begin{remark}: 
\begin{lemma}
Let $q$ be a weakly positive unit form. 
Then the following statements hold.

(a) For every pair $i,j$ such that $q_{ij} < 0$, we have $q_{ij} = -1$.

(b) For any root $v$ of $q$ we have $-1 \le q(v,e_i) \le 1$, for every $1 \le i \le n$ such that $v(i) \ne 0$. In particular, there is a chain of roots $e_{j(0)}=v_1, v_2,\ldots, v_m=v$ such that $v_{i-1}= v_i -e_{j(i)}$ for $2 \le i \le m$ and some indices $j(i)$.

(c) Drozd \cite{Dr0}: $q$ accepts only finitely many positive roots.
%\end{remark}
\end{lemma}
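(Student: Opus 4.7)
First I dispose of (a) by a direct computation: $q(e_i+e_j)=q(e_i)+q(e_i,e_j)+q(e_j)=2+q_{ij}$. Since $e_i+e_j\in\N^n\setminus\{0\}$, weak positivity forces $q(e_i+e_j)\geq 1$, which gives $q_{ij}\geq -1$.

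For (b), I fix a root $v$ and an index $i$ with $v(i)>0$, and assume first that $v\neq e_i$. Then both $v-e_i$ and $v+e_i$ lie in $\N^n\setminus\{0\}$, so weak positivity applied to $q(v\pm e_i)=q(v)\pm q(v,e_i)+q(e_i)=2\pm q(v,e_i)$ yields $-1\leq q(v,e_i)\leq 1$. To build the chain, I induct on $|v|:=\sum_j v(j)$; the base case $|v|=1$ is immediate, with $v=e_{j(0)}$. For the inductive step I exploit the identity
\[
2=2q(v)=q(v,v)=\sum_{j:\,v(j)>0} v(j)\,q(v,e_j).
\]
If $q(v,e_j)\leq 0$ held for every $j$ with $v(j)>0$, the right-hand side would be non-positive, a contradiction. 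Hence some $j$ satisfies $v(j)>0$ and $q(v,e_j)\geq 1$, hence $q(v,e_j)=1$ by the first part (applicable since $|v|\geq 2$ ensures $v\neq e_j$). Setting $v':=v-e_j$ one computes $q(v')=q(v)-q(v,e_j)+q(e_j)=1$, so $v'$ is a positive root with $|v'|=|v|-1$, and the inductive chain for $v'$ extends to one ending at $v$ by appending $v=v'+e_j$.

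Part (c) is the main obstacle. My plan is to bound the total of positive roots by a compactness argument. The form $q$ extends continuously to $\R^n$, and by weak positivity together with homogeneity, $q>0$ on $\mathbb{Q}^n_{\geq 0}\setminus\{0\}$, hence $q\geq 0$ on $\R^n_{\geq 0}$ by continuity. The crucial step is to upgrade this to strict positivity on $\R^n_{\geq 0}\setminus\{0\}$. Restricting to the compact simplex $\Delta:=\{v\in\R^n_{\geq 0}:\sum_i v_i=1\}$, if $q$ attained the value $0$ at some interior point $v_0$, then $v_0$ would minimize $q|_\Delta$ and a Lagrange multiplier computation would force $q(v_0,e_j)$ to be a common constant in $j$; pairing with $v_0$ and using $q(v_0,v_0)=2q(v_0)=0$ shows this constant is $0$, so $v_0$ lies in the radical of $q$. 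Since the radical is a $\mathbb{Q}$-linear subspace that meets the open positive cone, it contains a rational vector with strictly positive entries, contradicting weak positivity after clearing denominators; the boundary case is handled by induction on $n$ by restricting to the support of $v_0$. Once $q\geq\varepsilon>0$ on $\Delta$ is established, homogeneity gives $q(v)\geq\varepsilon(\sum_i v_i)^2$ for all $v\in\R^n_{\geq 0}$, so any positive root $v$ satisfies $\sum_i v(i)\leq 1/\sqrt{\varepsilon}$, leaving only finitely many. (If this real-analytic detour is to be avoided, one can instead follow Drozd's original combinatorial induction on the number of variables, using the chain in (b) to reduce to subforms obtained by deleting a vertex.)
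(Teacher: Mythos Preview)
Your proof is correct and follows essentially the same route as the paper: parts (a) and (b) are argued identically, and for (c) the paper also extends $q$ to $\R^n$, asserts (by induction on $n$) strict positivity on the nonzero positive cone, and then bounds the norm of positive roots via compactness and homogeneity. Your Lagrange-multiplier/rational-radical argument merely fills in the induction step that the paper leaves as a one-line claim.
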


\begin{proof} 
(a) Since $0 < q(e_i+e_j)= 2+ q_{ij}$, hence $-2 < q_{ij}$.

(b) Let $v$ be a root and $1 \le i \le n$ with $v(i) \ne 0$, then both $v-e_i$ and $v+e_i$ are non-negative vectors. Hence $0 \le q(v-e_i)=2-q(v,e_i)$ (it is $0$ only if $v=e_i$) and $0< q(v+e_i)=2 +q(v,e_i)$ yield the desired inequalities. Since $2=q(v,v)= \sum\limits_{i=1}^n v(i) q(v,e_i)$, there is some index $j$ with $q(v,e_j)=1$ and $v(j) \ne 0$. Hence $\sigma_j(v)$ is a root and we repeat inductively the argument.

(c) Consider $q$ as a function $q : \R^n \to \R$. By
continuity $q (z) \geq 0$ in the positive cone $K = (\R^{n})^{+}$. By
induction on $n$, it can be shown that $q (z) > 0$ for any $0 \not= z
\in K$. Let $0 < \gamma$ be the minimal value reached by $q$ on $\{z
\in K : \Vert z\Vert = 1\}$ (a compact set). Then a positive
root $z$ of $q$ satisfies $\gamma \leq q \left({z \over \Vert z
\Vert} \right) = {1 \over \Vert z \Vert^{2}}$, that is,
$\Vert z \Vert \leq \sqrt{1 / \gamma}$.
\end{proof}

\subsection{} %1.2

The following was essentially proved in \cite{22}.

\begin{lemma}
Let $q\colon \Z^n \to \Z$ be a weakly non-negative unit form. Then the following statements hold.

(a) For every pair $i,j$ such that $q_{ij} < 0$, we have $-2 \leq q_{ij}$.

(b) For an omnipresent root $v$ of $q$ we have $-2 \le q(v,e_i) \le 2$, for every $1 \le i \le n$. 
If $v$ is a maximal root of $q$ then $0 \le q(v,e_i)$, for every $1 \le i \le n$.

(c) If $q$ accepts a maximal omnipresent root, then $q$ has only finitely many omnipresent roots.
\end{lemma}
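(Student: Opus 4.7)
The plan is to treat the three parts in order, with (a) and (b) following by direct evaluations of $q$ on specific vectors and (c) being the substantive step. For (a), the identity $q(e_i+e_j)=2+q_{ij}$ combined with $e_i+e_j\in\N^n$ forces $q_{ij}\ge -2$ by weak non-negativity. The two-sided bound in (b) follows from $0\le q(v\pm e_i)=2\pm q(v,e_i)$, using that $v\pm e_i\in\N^n$ since $v$ is omnipresent. For the second assertion of (b), the expansion $q(v+ke_i)=1+kq(v,e_i)+k^2$ shows that $q(v,e_i)=-1$ (take $k=1$) or $q(v,e_i)=-2$ (take $k=2$) would produce a root strictly larger than $v$, contradicting maximality.

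For (c), I would first combine the conclusion $q(v,e_i)\ge 0$ of (b) with $\sum_i v(i)\,q(v,e_i)=q(v,v)=2$ and $v(i)\ge 1$. This forces the exceptional set $E:=\{i:q(v,e_i)>0\}$ to be non-empty with $|E|\le 2$, and the linear form $\phi(x):=q(v,x)=\sum_{i\in E}x(i)\,q(v,e_i)$ is non-negative on $\N^n$, taking integer values $\ge 1$ on every omnipresent root. A matching upper bound comes from (b) applied to $w$ omnipresent: $q(w,e_j)\le 2$ for all $j$, whence $\phi(w)=\sum_j v(j)\,q(w,e_j)\le 2\sum_j v(j)$, a constant depending only on $v$.

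Suppose for contradiction that infinitely many omnipresent roots exist. The uniform upper bound on $\phi(w)$ together with $q(v,e_i)\ge 1$ for $i\in E$ forces the coordinates $w(i)$, $i\in E$, to be bounded; on a subsequence $(w_n)$ I may thus assume $w_n|_E$ constant while $\Vert w_n\Vert\to\infty$ along $E^c$. Splitting $q(w_n)=1$ into $E$- and $E^c$-contributions and dividing by $\Vert w_n\Vert^2$, the quadratic leading term forces the restriction of $q$ to $E^c$ to vanish on the limit direction $y:=\lim w_n/\Vert w_n\Vert$, a non-zero non-negative real vector supported in $E^c$ with $q(y)=0$.

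Finally, the derivative test $0\le q(y\pm te_i)=\pm t\,q(y,e_i)+t^2$ for $i\in F:=\supp(y)$ and small $t>0$ gives $q(y,e_i)=0$ for every $i\in F$, so $y|_F$ lies in the radical of the unit form obtained by restricting $q$ to $F$. This radical is a $\Z$-lattice in a rational subspace of $\R^F$ and contains the strictly positive real vector $y|_F$; it therefore contains a strictly positive integer vector, which extended by zero yields $d\in\N^n$ with $q(d)=0$ and $\supp d=F\subseteq E^c$. Since $F\cap E=\emptyset$, $q(v,d)=\sum_{i\in F}d(i)\,q(v,e_i)=0$, so $q(v+d)=1$ and $v+d$ is a root strictly greater than $v$, contradicting the maximality of $v$. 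The main obstacle is this rationality step, which relies on the structural fact that a strictly positive real isotropic vector of a weakly non-negative unit form must lie in the integer radical of the induced form on its support.
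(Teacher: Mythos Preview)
Your arguments for (a) and (b) coincide with the paper's: the same evaluations $q(e_i+e_j)$ and $q(v\pm e_i)$, and the paper's use of the reflection $\sigma_i(v)=v-q(v,e_i)e_i$ is exactly your vectors $v+e_i$ and $v+2e_i$ in the two negative cases.

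For (c) your proof is correct, but the paper takes a much shorter and purely combinatorial route. Instead of passing to a limit direction on the sphere and invoking rationality of the radical, the paper argues as follows: an infinite set of omnipresent roots in $\N^n$ contains an infinite ascending chain $v_1<v_2<\cdots$ (Dickson's lemma); by part (b) each tuple $(q(v_m,e_1),\ldots,q(v_m,e_n))$ lies in the finite set $\{-2,-1,0,1,2\}^n$, so two terms $v_r<v_s$ in the chain have the same tuple; their difference $w=v_s-v_r$ is a nonzero non-negative vector in the radical of the bilinear form, hence $q(v,w)=0$ and $q(v+w)=1$, contradicting maximality. Your approach reaches the same contradiction---a radical-type vector supported away from the exceptional set $E$---but via real analysis (compactness, continuity, density of rational points in a rational subspace). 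What you gain is a geometric picture of why unboundedness forces an isotropic direction; what the paper's pigeonhole argument buys is that it avoids the limiting step and the rationality step entirely, staying inside $\Z^n$ throughout and using only the finite range of $q(\cdot,e_i)$ on omnipresent roots established in (b).
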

\begin{proof}
(a) As in (1.1). For (b), consider an omnipresent root $v$ of $q$ and $1 \le i \le n$. It is clear that $-2 \le q(v,e_i) \le 2$ and if $q(v,e_i) < 0$ then $v \le \sigma_i(v)$, is a root bigger than $v$ in the partial order given by the numerical coordinates. 

(c) Let $v$ be a maximal omnipresent root of $q$ and assume that $q$ accepts infinitely many omnipresent roots. Then there is an ascending chain of omnipresent roots $(v_i)_{i \in \N}$ in the considered partial order.  By (a), there should be two roots $v_r < v_s$ such that $q(v_r,e_i)=q(v_s,e_i)$, for every $1 \le i \le n$. Then the non-zero, non-negative vector $w=v_s-v_r$ satisfies $q(w, e_i)=0$ for every $1 \le i \le n$. Hence $v+w$ is strictly bigger than $v$ and $q(v+w)=1+q(v,w)= 1$, which contradicts the maximality of $v$.
\end{proof}

\subsection{} %1.4
We show the weak non-negative version of (1.1).

\begin{propo}
Let $q: \Z^n \to \Z$ be a weakly non-negative unit form. Let $v$ be a positive root of $q$.
Then there are a vertex $j$ and a positive root $y$ of $q$ such that there exists a sequence of reflections $\sigma_{i_1},\ldots,\sigma_{i_s}$ satisfying:

(1) $\sigma_{i_s}\ldots \sigma_{i_1}(e_j)=y \le v$;

(2) $\sigma_{i_t}\ldots
\sigma_{i_1}(e_j)= e_j +e_{i_1}+\cdots +e_{i_t}$ for all $1 \le t \le s$;

(3) every vector $0 \le u \le y$ has $q(u) \ge 1$;

(4) $q(v-y)=0$.
\end{propo}
\begin{proof}
We shall prove the result by induction on $|v|=\sum\limits_{i}v(i)$. The case $|v|=1$ is obvious.

Assume first that there is a vector $0 \le u \le v$ with $q(u)=0$. Consider the vector $0 \le v'=v-u$
which satisfies $0 \le q(v')=1-q(v,u) = 1- \sum \limits_{i}v(i) q(u,e_i) \le 1$, that is, one of two situations occur:

(a) $q(u,e_i)=0$ for every $i \in {\rm supp \,}v$ and $q(v')=1$;

(b) there is a unique $i$ with $v(i) \ne 0$ and $q(u,e_i)>0$. Then $v(i)=1=q(u,e_i)$ and $q(v')=0$.

In the former case, since $|v'| < |v|$, by induction hypothesis, there is a vector $y$ satisfying (1) to (3) and (4'): $q(v'-y)=0$. Therefore $q(v-y)=q(v'-y+u)=q(v'-y)=0$ and $y$ satisfies (1) to (4).
In the later case, $q(v'-e_i)=1$ and there is a vector $y \le v'-e_i$ satisfying (1) to (3) and (4''): $q(v'-e_i-y)=0$. Therefore $1=q(y)+q(v',y)=q(v'-y)=q(v'-y,e_i)+1$, due to the fact that $q(v',e_j)=0$ for every $v'(j) \ne 0$, and  $q(v-y)=q((v'-e_i-y)+u+e_i)=2+q(v'-e_i-y,e_i)= q(v'-y,e_i)=0$. Hence $y$ satisfies (1) to (4).

Without lost of generality, we assume that $v$ satisfies (3). Since $2=q(v,v)= \sum\limits_{i \in Q_0}v(i) q(v,e_i)$ then there are vertices $i$ with $v(i) \ne 0$ and  
$1 \le q(v,e_i) \le 2$, the last inequality due to (1.2). If $q(v,e_j)=2$ then $q(v-e_j)=0$ and we  are done. Assume $q(v,e_i) \le 1$ for all $i \in {\rm supp \,} v$ and choose $i_0$ with $v(i_0)>0$ and $q(v,e_{i_0})=1$ and consider the positive root $v_1=v-e_{i_0}$. Continue choosing vertices $i_1,\ldots,i_s$ such that $v_t=v_{t-1}-e_{i_{t-1}}$ is a positive root, for $1 \le t \le s$, with $v=v_0$ and every vector $0 \le u \le v$ satisfying $q(u) \ge 1$. Assume that $s$ is maximal. Then $y'=\sigma_{i_s}\ldots \sigma_{i_1}(e_{i_0})$ satisfies (1), (2) and (3). Moreover, either $v=y'$ or there exists a vertex $j$ such that $(v-y')(j) >0$ and $q(v-y',e_j)=2$. 
Assume that there exists a vertex $j$ such that $(v-y')(j) >0$ and $q(v-y',e_j)=2$. Since $q(v,e_j) \le 1$ then $q(v,e_j)=1= -q(y',e_j)$. Therefore $y=\sigma_j(y')=y'+e_j$ satisfies (1) to (4).
\end{proof}

\subsection{} %1.5
Let $v$ be a maximal root of a weakly non-negative unit form $q$. If for some vertex $i$ we had $q(v,e_i)<0$ then $\sigma_i(v)=v+q(v,e_i)e_i$ would be a root bigger than $v$. We shall say that a root $v$ of $q$ is {\em locally maximal} if $q(v,e_i) \ge 0$ for every index $i$. Maximal roots are locally maximal, but the converse does not hold as the following example shows.
Let $A$ be the algebra given by the quiver with relations below

$${\footnotesize \xymatrix@C10pt@R8pt{&&1\ar[rd]&&1\ar[ld]&&\\v:&&&2 \ar[dd]\ar@{--}[lldd]&&&\\
1 &&&&&&1\\&2\ar[ld]\ar[lu]&&4 \ar[rr]\ar[ll]&&2 \ar[rd]\ar[ru]&\\1\ar@{--}[rrdd]&&&&&&1\\&&&2 \ar[uu]\ar@{--}[rruu]&&&\\&&1\ar[ru]&&1 \ar[lu]&&
}
\hskip1cm \xymatrix@C10pt@R8pt{&&2\ar[rd]&&2\ar[ld]&&\\u:&&&4 \ar[dd]\ar@{--}[lldd]&&&\\
1 &&&&&&2\\&2\ar[ld]\ar[lu]&&6 \ar[rr]\ar[ll]&&4 \ar[rd]\ar[ru]&\\1\ar@{--}[rrdd]&&&&&&2\\&&&2 \ar[uu]\ar@{--}[rruu]&&&\\&&1\ar[ru]&&1 \ar[lu]&&
}
}$$
\noindent
Consider the Tits form $q_A$ of the strongly simply connected tame algebra $A$, the form $q_A$ is therefore weakly non-negative. The vector $v$ in the example is a locally maximal omnipresent root but the vector $u$ is also a root of $q_A$.
The following remark proves, in our context, the existence of exceptional indices.

\begin{propo}
Let $q\colon \Z^n \to \Z$ be a weakly non-negative unit form with a locally maximal omnipresent root $v$. Then one of the following three situations occurs:

(i) There exists a unique index $i$ with $q(v,e_i) > 0$, for all other $j \ne i$ we have $q(v,e_j)=0$. Moreover, $q(v,e_i)=1$ and $v(i)=2$.

(ii) There are two indices $a \ne b$ with $q(v,e_i) > 0$ for $i=a,b$, for all other $j \ne a,b$ we have $q(v,e_j)=0$. Moreover, $q(v,e_a)=1=q(v,e_b)$ and $v(a)=1=v(b)$.

(iii) There is a unique index $j$ with $q(v,e_j)>0$, for all other $i \ne j$ we have $q(v,e_i)=0$. Moreover, $q(v,e_i)=2$ and $v(j)=1$.

In case $v$ is maximal only (i) or (ii) may happen. Moreover, in that case, if $E$ is the set of exceptional indices, the quadratic form $q'=q^{(E)}$ obtained as the restriction of $q$ to the vertices not in $E$ is weakly positive.
\end{propo}
\begin{proof}
Consider $v$ a locally maximal omnipresent root of $q$. Then $2 = q(v,v) = \sum\limits_{i=1}^n v(i) q(v,e_i)$ and each $q(v,e_i) \ge 0$. 

In case $v$ is maximal and $q(v,e_i)\ge 2$ we get that $q(2v-e_i)=4 -2 q(v,e_i) +1 \le 1$. Since $2v-e_i$ is omnipresent, then the hypothesis yield $q(2v-e_i)=0$, then $q(3v-e_i)= 1 + q(v,2v-e_i)=1$, a contradiction to the maximality of $v$. 

To show that $q'$ is weakly positive, assume that $q(w)=0$ for some vector $0 \le w \in \Z^n$ with $w(i)=0$ for $i \in E$. Since $q$ is weakly non-negative $q(w,e_j)\ge 0$ for all $j$ such that $w(j) \ne 0$. Then $0= \sum\limits_{j=1}^n w(j) q(v,e_j)=q(v,w)= \sum\limits_{i \in E} v(i) q(w,e_i)$ and therefore $q(w,e_i)=0$ for all $i \in E$. But then $q(v+w)=1 +q(v,w)=1$, contradicting the maximality of $v$. Thus $q'$ is weakly positive.
\end{proof} 

\begin{coro}
Let $q\colon \Z^n \to \Z$ be a weakly positive unit form and $v$ a root of $q$. Then $v$ is locally maximal if and only if it is maximal.
\end{coro}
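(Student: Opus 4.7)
The plan is to prove both directions of the equivalence, one of which is essentially already noted in the text just before the statement of Lemma (1.4). Writing it out once makes the whole corollary transparent.

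First I would dispatch the easy direction (maximal implies locally maximal). If $v$ is a root with $q(v,e_i) < 0$ for some index $i$, then $\sigma_i(v) = v - q(v,e_i)e_i = v + |q(v,e_i)| e_i$ is still a root (reflections preserve the property of being a root), and it is strictly larger than $v$ coordinatewise. Hence $v$ is not maximal. This is precisely the observation opening Section 1.4.

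For the converse, suppose $v$ is locally maximal but not maximal, and seek a contradiction. There would exist a root $w$ with $w \geq v$ and $w \neq v$. Since both $v$ and $w$ are roots, they have non-negative coordinates, so the vector $u = w - v$ satisfies $u \geq 0$ and $u \neq 0$. Expanding
\[
1 = q(w) = q(v+u) = q(v) + q(v,u) + q(u) = 1 + q(v,u) + q(u),
\]
we obtain $q(u) = -q(v,u)$. Now local maximality of $v$ gives $q(v,e_i)\geq 0$ for all $i$, and combining this with $u \geq 0$ yields
\[
q(v,u) = \sum_{i=1}^n u(i)\, q(v,e_i) \geq 0,
\]
so $q(u) \leq 0$. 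But $q$ is weakly positive and $u$ is a non-zero vector in $\N^n$, hence $q(u) > 0$. This contradiction shows that no such $w$ exists, i.e., $v$ is maximal.

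There is no real obstacle here; the only delicate point is to remember that in the definition adopted in the paper, a root has non-negative coordinates by fiat, so that $w \geq v$ forces $w - v \in \N^n$ and the weak positivity of $q$ can be invoked on $u = w-v$. Once that is in place, the computation is a single line.
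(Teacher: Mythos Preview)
Your proof is correct and follows the same strategy as the paper: assume a strictly larger root exists, compute $q$ of the difference vector, bound it using local maximality, and contradict weak positivity. Your execution is in fact tighter than the paper's---you obtain $q(w-v)=-q(v,w-v)\le 0$ directly, which finishes immediately, whereas the paper only records the weaker bound $q(u-v)\le 1$, concludes $q(u-v)=1$, and then extracts the contradiction from the resulting constraints on the unique index $j$ with $q(v,e_j)>0$.
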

\begin{proof}
Assume $v$ is locally maximal but not maximal, say $u \ne v \le u$ for a root $u$. Then $u-v$ is a non-negative non-trivial vector satisfying $q(u-v)=2- \sum\limits_{i} u(i)q(v,e_i) \le 1$. Since $q$ is weakly positive then $q(u-v)=1$ and there is a unique index $j$ with $q(v,e_j) \ge 1$. In fact $q(v,e_j)=1$ and $2=v(j) \le u(j) \le 1$ yields a contradiction.
\end{proof}

\subsection{}%1.5
{\em Examples.} We consider unit forms $q$ defined by diagrams, where as usual, $s$ full edges (resp. $s$ dotted edges) between the vertices $i$ and $j$ means that $q_{ij}=-s$ (resp. $q_{ij}=s$).

(1) Weakly non-negative unit forms with a maximal omnipresent root $v$ have $v(i) \le 12$ for every index $i$, and the bound is optimal as shown in \cite{11}. The following is an example of a unit form $q$ with a maximal omnipresent root $v$ with $v(j)=12$ for some vertex $j$ and marked exceptional indices:
$$\footnotesize{\xymatrix{ & & \save[]+<0mm,0mm>*\frm<6pt>{o}*\txt<3pc>{${\rm 1}$}\restore \save[]+<-10.5mm,0mm>*\txt<2pc>{$\ar@{.}[ldd]$}\restore \save[]+<-10mm,0mm>*\txt<2pc>{$\ar@{.}[ldd]$}\restore \save[]+<-9.5mm,0mm>*\txt<2pc>{$\ar@{.}[ldd]$}\restore& & &&&&&&\\
            & & &&{\rm 4}\ar@{-}[llu] \ar@{-}[lld]\ar@{-}[r]&6\ar@{-}[r]&8\ar@{-}[r]&10\ar@{-}[r]&12\ar@{-}[d]\ar@{-}[r]&8\ar@{-}[r]&4\\
             &   &\save[]+<0mm,0mm>*\frm<6pt>{o}*\txt<3pc>{${\rm 1}$}\restore  &&&&&&6& & \\
& & &
}}$$

(2) Consider the forms associated to the diagrams (1) and (2) below:

$$
\footnotesize{\xymatrix{ {\rm 1} \ar@{-}[rd]&& &&& {\rm 1} \ar@{-}[ld]\\
{\rm 1} \ar@{-}[r]& {\rm 2} \ar@{-}[r] & \save[]+<0mm,0mm>*\frm<6pt>{o}*\txt<3pc>{${\rm 1}$}\restore \ar@{.}[r]& \save[]+<0mm,0mm>*\frm<6pt>{o}*\txt<3pc>{${\rm 1}$}\restore \ar@{-}[r] &{\rm 2} \ar@{-}[r]& {\rm 1} \\
                         {\rm 1} \ar@{-}[ru] &&  & & &{\rm 1} \ar@{-}[lu] \\           
   & & (1) &      } \hskip1truecm
\xymatrix{ & & \save[]+<0mm,0mm>*\frm<6pt>{o}*\txt<3pc>{${\rm 1}$}\restore \save[]+<-12.7mm,0mm>*\txt<2pc>{$\ar@{.}[ldd]$}\restore \save[]+<-12mm,0mm>*\txt<2pc>{$\ar@{.}[ldd]$}\restore \save[]+<-13.5mm,0mm>*\txt<2pc>{$\ar@{.}[ldd]$}\restore& & \\
            {\rm 1}\ar@{-}[rru]\ar@{-}[rrd]& {\rm 1}\ar@{-}[ru]\ar@{-}[rd]& &{\rm 1}\ar@{-}[lu] \ar@{-}[ld]&{\rm 1}\ar@{-}[llu] \ar@{-}[lld]\\
             &   &\save[]+<0mm,0mm>*\frm<6pt>{o}*\txt<3pc>{${\rm 1}$}\restore  & & \\
& & (2) &
}}$$
\noindent
The omnipresent maximal root $v$ displayed on the form (1) has two (marked) exceptional indices.  The unit form $q$ given by diagram (2) accepts a maximal omnipresent root $v$ with two (marked) exceptional indices.

(3)The following quadratic form $q$ accepts the indicated locally maximal omnipresent root $v$ with only one exceptional vertex $j$ and $v(j)=1$. Observe that the form $q^{(j)}$ is not weakly positive:
$${\footnotesize \xymatrix@C12pt@R12pt{&1 \ar@{-}[rr]\ar@{-}[d]&&1\ar@{-}[d]&\\&1 \ar@{-}[rr]\ar@{-}[ld]\ar@{--}[rdd]&&1\ar@{-}[rd]\ar@{--}[ldd] &\\1 \ar@{-}[rrd]&&&& 1 \ar@{-}[lld]\\&&\save[]+<0mm,0mm>*\frm<6pt>{o}*\txt<3pc>{${\rm 1}$}\restore  &&
}}$$
(4) The following example shows that a unit form may accept several maximal omnipresent roots (even with different exceptional vertices):
$$ \xymatrix@C12pt@R9pt{ 
& {\rm 1} \ar@{-}[d] & &  \\ 
&{\rm 2} \ar@{-}[r] & {\rm 2} \ar@{-}[rd]& \\
  {\rm 1} \ar@{-}[ru]\ar@{-}[rd]\ar@{--}[rrr] & &  &\save[]+<0mm,0mm>*\frm<6pt>{o}*\txt<3pc>{${\rm 2}$}\restore  \\ 
      & {\rm 2} \ar@{-}[rru]  & & \\   
& {\rm 1} \ar@{-}[u] & &
} \hskip3truecm
\xymatrix@C12pt@R9pt{ 
& \save[]+<0mm,0mm>*\frm<6pt>{o}*\txt<3pc>{${\rm 2}$}\restore \ar@{-}[d] & &  \\ 
&{\rm 3} \ar@{-}[r] & {\rm 2} \ar@{-}[rd]& \\
  {\rm 2} \ar@{-}[ru]\ar@{-}[rd]\ar@{--}[rrr] & &  &{\rm 1}  \\ 
      & {\rm 2} \ar@{-}[rru]  & & \\   
& {\rm 1} \ar@{-}[u] & &
}
$$

(5) We shall provide an example of a root $v$ of the Tits form of a strongly simply connected algebra $A$ such that $v$ is not realizable as $v = \udim X$ for an indecomposable $A$-module $X$.
Consider the algebra $A$ given by the following quiver with relations and the vector $v$ as indicated on the vertices of the quiver:
$${\footnotesize \xymatrix@R8pt@C8pt{&&\bulito \ar[rd]&&\bulito \ar[ld]&&\\
&&&\bulito\ar[rrrd]^{\delta}&&& \\
A:&a\ar[rru]^{\gamma}\ar[rd]_{\alpha}\ar@{--}[rrrrr]&&&&&\bulito\\
&&b\ar[r]_{\beta}&c\ar[r]&d\ar[r]&\bulito \ar[ru]&\\
&&&\bulito\ar[ru]&&\bulito\ar[lu]&} \hskip1cm \xymatrix@R8pt@C8pt{&&1\ar[rd]&&1\ar[ld]&&\\
&&&3\ar[rrrd]&&& \\
v:&1\ar[rru]\ar[rd]\ar@{--}[rrrrr]&&&&&1\\
&&2\ar[r]&1\ar[r]&2\ar[r]&1\ar[ru]&\\
&&&1\ar[ru]&&1\ar[lu]&}}
$$
\noindent
where the dotted line indicates a commutativity relation. We get $q_A(v)=1$, that is, $v$ is a root of the Tits form of $A$. Assume, to get a contradiction,  that there exists an indecomposable $A$-module $X$ such that $\udim X=v$. In case, $X(a) \to X(b) \to X(c)$ is non-zero, there is a decomposition $X(b)=k e_1 \oplus k e_2$ such that $k e_2$ is a direct summand of $X$. Therefore 
$X(\beta)X(\alpha)=0$ and also $X(\delta)X(\gamma)=0$. Hence $X$ restricts to an indecomposable representation $Y$ of the algebra $B$ as in the picture:
$$\xymatrix@R8pt@C8pt{B:&&\bulito \ar[rd]&&\bulito \ar[ld]&\\
&&&\bulito\ar[rrd]^{\delta}&& \\
&\bulito \ar[rru]^{\gamma}\ar@{--}[rrrr]&&&&\bulito}$$
which is representation-finite with a preprojective component. Therefore $q_B(\udim Y)=2$ contradicts the main result in \cite{3}.

\subsection{}%1.6
Frequently we shall deal with restrictions of quadratic forms and quotients of algebras. The relationship between these operations is clarified in the following proposition.

\begin{propo}
Let $A=kQ/I$ be an algebra and $q = q_A$ the corresponding Tits form. Consider a vertex $a$ of $Q$ and the unit forms: $q'$ the restriction of $q$ to $Q':=Q\setminus \{a\}$ and $\bar q$ the Tits form of the quotient algebra ${\bar A}=A/Ae_aA$. The following statements hold:

(a) ${\bar q} \le q'$ in $\N^{Q_0}$;

(b) if ${\bar q}$ is weakly positive, then $q'$ is weakly positive;

(c) if ${\bar q} \ne q'$ then there is a relation $w + w' \in I(i,j)$, with $i\ne a \ne j$, such that $w$ (resp. $w'$) is a combination of paths passing (resp. not passing) through $a$;

(d) if $v \in \N^{Q_0}$ is a root of $q$ with $v(a)=0$, then $1=q'(v)={\bar q}(v)$;
 
(e) if $v \in \N^{Q_0}$ is an omnipresent root of $q$ such that $q(v,e_a)=1=v(a)$, then $q'={\bar q}$.
\end{propo}
\begin{proof}
Assume that $Q_0=\{1,\ldots, n\}$. First observe that ${\bar A}= kQ'/{\bar I}$, where ${\bar I}=I/A e_aA \cap I$. Let $r_1, \ldots,r_s$ be a minimal set of generators of $I$ in $\cup_{i,j}I(i,j)$ and, for each pair $i,j \in Q$, set $r(i,j)$ the number of those $r_p$ in $I(i,j)$. Write each $r_p=r'_p+r''_p$ as linear combination of paths where $r'_p$ (resp. $r''_p$) is a linear combination of paths not passing (resp. passing) through $a$. Then $r'_1,\ldots,r'_s$ generate ${\bar I}$. We get ${\bar r}(i,j) \le r(i,j)$ for each pair $i,j \in Q_0'$.

This shows (a) and (b) to (d) follow. Only (e) needs an additional argument. The vector $w=v-e_a$ is a root of $q$ with $w(a)=0$. Hence ${\bar q}(w)=1=q'(w)$. The quadratic form ${\hat q}= q' -{\bar q}$ is weakly non-negative by (a) and $w$ is omnipresent in $Q'$ with $0={\hat q}(w)= \sum\limits_{i, j \in Q_0'} w(i)w(j)(r(i,j)-{\bar r}(i,j))$. Therefore $q'={\bar q}$.
\end{proof}

\subsection{} %1.8
We recall that a unit form $q:\Z^n \to \Z$ is {\em critical} (resp. {\em hypercritical}) if $q$ is not weakly positive (resp. not weakly non-negative) but every proper restriction $q|J: \Z^J \to \Z$, for $J$ a proper subset of $\{ 1,\ldots,n \}$, is weakly positive (resp. weakly non-negative). Clearly, a unit form is weakly positive (resp. weakly non-negative) if and only if it does not accept a critical (resp. hypercritical) restriction.

Let $q(x_1,\ldots,x_n)$ be a critical unit form. If $n \ge 3$, a theorem of Ovsienko, see \cite{21}, says that $q$ is non-negative and there is an omnipresent vector $z$ with positive integral coordinates and $q(z)=0$; a minimal such vector is called a {\em critical vector}. If $n=2$, then $q_{12} \le -2$ (only if $q_{12}=-2$ we get a critical vector $(1,1)$). The critical forms and critical vectors have been classified \cite{21}, it is important to observe that for a critical vector $z$ there is always an index $i$ with $z(i)=1$.

In \cite{18} it was shown that given a hypercritical unit form $q(x_1,\ldots,x_n)$, there is an index $1 \le j \le n$ such that the restriction $q|J$ is critical for $J= \{ 1,\ldots,n \} \setminus \{ j\}$; moreover, for any index $1 \le i \le n$ such that the restriction $q|I$ is critical for $I= \{ 1,\ldots,n \} \setminus \{ i\}$, then the corresponding critical vector $z_i$, as an element of $\Z^n$, satisfies $q(z_i,e_i) < 0$. We write $q^{(i)}=q|I$ for $I= \{ 1,\ldots,n \} \setminus \{ i\}$. The following statement is a reformulation of \cite{18}, Proposition (1.4), we present a sketch of its proof.

\begin{lemma}
Let $q(x_1,\ldots,x_n)$ be a slender hypercritical form. Then one of the following two situations occur:

(1) there are positive vectors $v$ and $w$ satisfying $q(v)=-1$ and $q(w)=-3$;

(2) $n=4$ and $q=q_M$ is the form associated to the quiver:
$$\xymatrix{  &\bulito \ar[dd]\ar[rd]\ar[rrd]& & \\
             M: & & \bulito \ar[r]\ar[ld] & \bulito \ar[lld]\\
              & \bulito & &
}$$
\noindent In this case, there are positive vectors $v$ and $w$ satisfying $q(v)=-2$ and $q(w)=-3$. 
\end{lemma}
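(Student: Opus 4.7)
The plan is to reduce everything to a single critical restriction. By the structural statement from \cite{18} recalled just above, I can pick an index $i$ such that $q^{(i)}$ is critical; let $z = z_i \in \N^{n-1}$ denote its critical vector, extended to $\Z^n$ by $z(i)=0$, so that $q(z)=0$. By the same reference, the coefficient $a := -q(z, e_i)$ is strictly positive. The key computational device is the identity
\[
q(s z + t e_i) = s t\, q(z, e_i) + t^2 = t(t - s a)
\]
for positive integers $s,t$, which produces a two-parameter family of positive vectors whose $q$-values are controlled by $a$ alone. If $a = 2$, then $v = z + e_i$ and $w = 2 z + e_i$ satisfy $q(v) = -1$ and $q(w) = -3$ directly, establishing case~(1).

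The remaining work is a case analysis for $a \neq 2$. Using Ovsienko's classification of critical unit forms and the slender hypothesis $q_{ij} \geq -1$, the expression $a = \sum_{j \neq i} z(j)(-q_{ij})$ permits only finitely many ways in which the vertex $i$ can be attached to $q^{(i)} \in \{\tilde{A}_r, \tilde{D}_r, \tilde{E}_6, \tilde{E}_7, \tilde{E}_8\}$. For each such attachment I plan to either locate a \emph{second} index $i'$ whose critical restriction has $a_{i'} = 2$, returning to the previous paragraph, or to combine vectors from two critical restrictions to fabricate $v$ and $w$ by hand. The identity $q(z_i + z_{i'}) = q(z_i, z_{i'})$, valid whenever both $q^{(i)}$ and $q^{(i')}$ are critical, together with reflections at the exceptional indices, is what tunes the resulting $q$-values.

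The only configuration that resists both strategies is the one in which $n=4$ and every restriction $q^{(i)}$ equals the triangle form $\tilde{A}_2$ with $a_i = 3$ for each $i$. A direct enumeration in the spirit of \cite{18}, Prop.~(1.4), shows this forces $q = q_M$, namely the Tits form of the complete graph $K_4$. Here I will verify the explicit choices $v = (1,1,1,1)$ and $w = (1,1,2,2)$, for which a short calculation gives $q(v) = -2$ and $q(w) = -3$, yielding case~(2). The hardest step of the argument, and the real content of the reference, is the exhaustive elimination of the cases $a \in \{3,4\}$ when $n \ge 5$ or when $q^{(i)}$ is of a type different from $\tilde{A}_2$; my sketch amounts to presenting this enumeration rather than re-deriving it in full.
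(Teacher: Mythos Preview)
Your computational identity $q(sz+te_i)=t(t-sa)$ is exactly the paper's device, and your treatment of $a=2$ matches the paper verbatim. But you are overlooking that the very same identity disposes of $a=1$ immediately: $q(2z+e_i)=-1$ and $q(4z+e_i)=-3$. So only $a\ge 3$ requires further argument, and here your proposal diverges substantially from the paper.

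You plan to invoke Ovsienko's classification of critical forms and enumerate attachments of the extra vertex, searching for a second critical restriction with $a'=2$. The paper avoids all of this with a single ``swap'' trick. Pick an index, say $1$, with $z(1)=1$ (every critical vector has such an index) and look at $v'=z-e_1+e_i$. This vector is non-negative but not omnipresent, so hypercriticality forces $q(v')\ge 0$; expanding gives
\[
0\le q(v')=2-a-q(e_1,e_i)\le 3-a,
\]
the last inequality because $q$ is slender. Hence $a\le 3$ with no classification needed. Moreover, when $a=3$ this forces $q(v')=0$ and $q(e_1,e_i)=-1$, so $v'$ is itself a critical vector for $q^{(1)}$ with $v'(i)=1$. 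One can now iterate the whole argument with the pair $(v',e_1)$ in place of $(z,e_i)$: if the new coupling constant is $1$ or $2$ we are done, and if it is again $3$ we swap once more. After at most three swaps all pairwise $q_{ij}$ among the involved indices equal $-1$, forcing a $\tilde A_2$ restriction and hence $n=4$, $q=q_M$.

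So your route is not wrong, but it replaces a three-line inequality by an appeal to an external enumeration that you yourself describe as ``the hardest step''. The paper's swap both bounds $a$ and manufactures the next critical restriction in one stroke; this is the idea worth internalising.
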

\begin{proof}
Observe that $n \ge 3$ and consider the critical restriction $q'=q^{(n)}$ with critical vector $z$ considered as an element of $\Z^n$ and $q(z,e_n)=-m$, a negative integer. Assume $z(1)=1$.

(a) Show $0 < m \le 3$. Indeed, the vector $v=z-e_1+e_n$ is not omnipresent, therefore
$$0 \le q(v)=q(z)+2-q(z,e_1)+q(z,e_n)-q(e_1,e_n)= 2 - m -q(e_1,e_n) \le 3 -m.$$

(b) For $m=1$, we get $q(2z+e_n)=-1$ and $q(4z+e_n)=-3$.

(c) For $m=2$, we get $q(z+e_n)=-1$ and $q(2z+e_n)=-3$.

(d) For $m=3$, we get $q(v)=0$ and $q(e_1,e_n)=-1$. Hence $q^{(1)}$ is critical and $v$ is a critical vector (since $v(n)=1$). We may assume $q(v, e_1)=-r$ is a negative integer. If $r=1$ or $2$ we get by (b) and (c) vectors $v'$ and $w'$ satisfying $q(v')=-1$ and $q(w')=-3$ and we are done. Therefore, we may assume $r=3$ and as above get $q(e_2,e_1)=-1$. Repeating the argument, we may assume $q(e_2,e_n)=-1$ which implies that $q|\{1,2,n \}$ is critical of type ${\tilde A}_2$. Hence $n=4$ and by symmetry $q=q_M$. Consider $v= (1,1,1,1)$ and $w=(2,2,1,1)$ to get $q(v)=-2$ and $q(w)=-3$.
\end{proof}

\subsection{}%1.9
We specialize the main result of \cite{18} into the following statement which is fundamental in our considerations.

\begin{theor}
Let $q: \Z^n \to \Z$ be a slender unit form accepting a maximal omnipresent root. Then $q$ is weakly non-negative.
\end{theor}
\begin{proof} 
Let $v$ be a maximal omnipresent root of $q$. Assume $q$ is not weakly non-negative, then there is a restriction $q'=q|J: \Z^J \to \Z$ which is {\em hypercritical}. First suppose that there are non-negative vectors $w_1,w_2 \in \Z^J$ such that $q'(w_1)=-1$ and $q'(w_2)=-3$. We have $q(v,w_1) = 0$. Indeed, otherwise by (1.2), we had $q(v,w_1)>0$ then the vector $w:= v+q(v,w_1)w_1$ which is strictly bigger than $v$ satisfies $q(w) = 1+ q(v,w_1)^2 +q(v,w_1)^2q'(w_1) =1$, contradicting the maximality of $v$. Since the support of $w_1$ and $w_2$ is the same $J$, then also $q(v,w_2)=0$ and the vector $w':=2v+w_2$ yields the final desired contradiction.
                                 
Next suppose that $q'=q_M$ and consider $w_1,w_2 \in \Z^{ \{1,2,3,4 \} }$ such that $q'(w_1)=-2$ and $q'(w_2)=-3$. Observe that $0 \le q(v,w_1) \le 2$. In case $q(v,w_1)=0$, then also $q(v,w_2)=0$ and $q(2v+w_2)=1$, a contradiction. In case $q(v,w_1)=1$, then $q(v+2w_1)=-1$ and we may switch to the first case. In case $q(v,w_1)=2$, then $q(v+2w_1)=1$ which is a contradiction. 
\end{proof}

\section{On the realization of roots}
\subsection{}%2.1

A {\em quiver\/} $Q$ is an oriented graph with set of vertices $Q_0$
and set of arrows $Q_1$. The {\em path algebra\/} $kQ$ has as
$k$-basis the oriented paths in $Q$, including a trivial path $e_s$
for each vertex $s\in Q_0$, with the product given by concatenation
of the paths. A module $X\in \mod_{kQ}$ is a {\em representation\/}
of $Q$ with a vector space $X(s)=e_sX$ for each vertex $s\in Q_0$ and
a linear map $X(\alpha)\colon X(s)\to X(t)$ for each arrow $s\
\aderecha{\alpha}\ t$ in $Q_1$.

For a finite-dimensional $k$-algebra $A=kQ/I$, the quiver
$Q$ is defined in the following way: the set of vertices $Q_0$ is the set of
isoclasses of simple $A$-modules $\{1,\ldots,n\}$. Let $S_i$ be a
simple $A$-module representing the $i$-th trivial path $e_i$. Then there are as
many arrows from $i$ to $j$ in $Q$ as $\dim_k\Ext^1_A(S_i,S_j)$. If $A$ is triangular, by a result of Bongartz \cite{3}, we can select $r_{ij}=\dim_k\Ext^2_A(S_i,S_j)$ linear combinations 
$f_{ij}^{(1)}, \ldots, f_{ij}^{(r_{ij})}$ of paths from $i$ to $j$ in such a way that 
$\{ f_{ij}^{(d)}: i,j \in Q_0 {\rm \, \, and \,\,}1 \le d \le r_{ij} \}$ forms a minimal set of generators of $I$.

We shall identify $A=kQ/I$ with a $k$-category whose objects are the
vertices of $Q$ and whose morphism space $A(s,t)$ is $e_tAe_s$. We
say that $B$ is a {\em convex subcategory\/} of $A$ if $B=kQ'/I'$ for
a path closed subquiver $Q'$ of $Q$ and $I'=I\cap kQ'$. In this view,
an $A$-module $X$ is a $k$-linear functor $X\colon A\to \mod_k$. The
{\em dimension vector\/} of $X$ is $\udim\,X=(\dim_kX(s))_{s\in
Q_0}\in \N^{Q_0}$ and the {\em support\/} of $X$ is the set of vertices 
$\supp\,X=\{s \colon X(s)\ne 0\}$.

For an algebra $A$, we consider the standard duality $D\colon
\mod_A\to \mod_{A^{op}}$ defined as $D=\Hom_k(-,k)$, where $A^{op}$
is the opposite algebra of $A$. 

\subsection{} %2.2
Let $v\in \N^{Q_0}$ be a dimension vector. The {\it variety of
$A$-modules} of dimension vector $v$ is the closed subset $\mod_A (v)$ of
the affine space $\prod\limits_{(i \aderecha{\alpha}\ j)\in Q_1} k^{v(i)v(j)}$\ , 
formed by all tuples
$M=(M(\alpha ))_{\alpha \in Q_1}$ such that for any element $\rho =
\sum\limits^r_{s=1}\lambda_s\alpha^{(s)}_{j_s}\cdots
\alpha^{(s)}_{j_1}\in I(i,j)$ the $v(i)\times v(j)$-matrix $M(\rho ) =
\sum\limits^r_{s=1}\lambda_s M(\alpha^{(s)}_{j_s})\cdots
M(\alpha^{(s)}_{j_1})$ is zero (see \cite{28}). The affine algebraic group
$G(v)=\prod\limits_{i\in Q_0}GL_k(v(i))$
acts on the variety $mod_A (v)$ in such a way that two points
belong to the same orbit if and only if they are isomorphic. Clearly,
$\dim \ G(v)=\sum\limits_{i\in Q_0} v(i)^2$.

Let $v\in \N^{Q_0}$ be a dimension vector. Let $C$ be an
irreducible component of $mod_A (v)$. A decomposition $v=w_1 +
\cdots +w_s$ with $w_s\in \N^{Q_0}$ is called a {\it generic
decomposition in} $C$ if the set
$$\{X\in C\colon X=X_1 \oplus \cdots \oplus X_s\ \hbox{with}\
\udim \ X_i=w_i\ \hbox{and}\ X_i\ \hbox{indecomposable}\}$$
\noindent 
contains an open and dense subset of $C$. Given an irreducible component $C$ of $\mod_A (v)$,
there always exists a unique generic decomposition in $C$, say $v=w_1 +
\cdots +w_s$, and there are irreducible components $C_i$ of $\mod_A (w_i)$
such that the generic decomposition in $C_i$ is irreducible and the
following inequality holds:
$$\dim \ G(z)-\dim \ C\ge \sum^s_{i=1}(\dim \ G(w_i)-\dim \ C_i),$$
\noindent 
\cite{23}, \cite{25}. Moreover, generically there are no extensions between modules in $C_i$ and $C_j$ for $i \ne j$, see \cite{9}, \cite{23}. 

\subsection{}%2.3
Let $A$ be a triangular algebra and consider the bilinear form
$$\langle v,w \rangle_A = \sum_{i\in Q_0}v(i)w(i) -\sum_{(i\to j)\in Q_1}v(i)w(j)
+\sum_{i,j\in Q_0}r(i,j)v(i)w(j)$$
whose associated quadratic form is $q_A$. Recently, the authors showed the following result \cite{30'}.

\begin{theor}
For any two $A$-modules $X, Y$ over a triangular algebra $A$ the inequality
$$\langle \udim \, X, \udim \, Y \rangle_A  \ge \dim_k \Hom_A(X,Y) - \dim_k \Ext^1_A(X,Y)$$
holds.
\end{theor}

\subsection{}%2.4 
The following, probably well-known, result is relevant in our context.

\begin{propo}
Let $A=kQ/I$ be a triangular algebra and $v$ be a root of the Tits form $q_A$ satisfying the following conditions:

(a) $q_A$ is weakly non-negative;

(b) there is a non-negative vector $u \in \Z^n$ such that for any non-negative vector $z \le v$ with $q_A(z)=0$, then $z$ is an integral multiple of $u$.

Then there exists a number $r \in \N$ such that $0 \le v-ru$ and there is an indecomposable realization $X \in \mod_A(v-ru)$. In case, $v$ is a maximal positive root, then there is an indecomposable realization $X \in \mod_A(v)$.
\end{propo}
\begin{proof}   
Let $A=kQ/I$ be a triangular algebra and $v$ a root of the Tits form $q_A$ satisfying (a) and (b). Take $C$ an irreducible component of $\mod_A (v)$ of maximal dimension and $v=w_1 + \ldots +w_s$ the generic decomposition. Let $C_i$ be an irreducible component of $\mod_A (w_i)$ such that the generic decomposition in $C_i$ is irreducible. Moreover we may define open subsets $U_i$ of $C_i$ where each $X_i \in U_i$ is extension-orthogonal to any $X_j \in U_j$, for $j \ne i$. For any choice $X_i$ of indecomposable modules in $U_i$, $i= 1,\ldots, s$, we get
$$1=q_A(v)= \sum\limits_{i=1}^s q_A(w_i)+ \sum\limits_{i \ne j}\langle w_i,w_j \rangle \ge \sum\limits_{i=1}^s q_A(w_i)+ \sum\limits_{i \ne j}\dim_k \Hom_A(X_i,X_j),$$
\noindent  
where all the summands are non-negative. We distinguish two cases.

(1) There is one $w_i$ such that $q_A(w_i)=1$, say $i=1$. Suppose $s> 1$, then $q_A(v,w_2)=0$ and
$q_A(w_2)=0$. We may suppose that $w_2=r u$ for certain $r \in \N$. Then $w_1=v-r u$ accepts a realization $X_1 \in \mod_A(w_1)$. In case $v$ is maximal and $s>1$ then $q_A(v+w_2)=1$ yields a contradiction. Therefore $v$ maximal root implies $s=1$.

(2) All of the $w_i$ satisfy $q_A(w_i)=0$. For some $i \ne j$ we have $q_A(w_i,w_j) = \dim_k\Hom_A(X_i,X_j) =1$, for modules $X_i \in U_i$ and $X_j \in U_j$, say $i=1,j=2$ and for all others $i \ne 1,2$ and any $j$ the modules in $U_i$ and $U_j$ are Hom-orthogonal.  
Then $s > 1$ and there are constants $r_1, r_2$ such that $w_1=r_1u$ and $w_2=r_2u$. Then $1 = q_A(w_1,w_2)=r_1r_2q_A(u,u)=0$ which is a contradiction. Therefore situation (b) never occurs.
\end{proof}

\begin{coro}
Let $A$ be a triangular algebra whose Tits form is weakly non-negative and $v$ be a root of $q_A$ such that for every $0 \le u \le v$ we have $q_A(u) \ge 1$. Then there exists an indecomposable realization $X \in \mod_A(v)$.
\end{coro}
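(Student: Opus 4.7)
\begin{proofs}
The plan is to deduce the corollary directly from the preceding Proposition, checking that the stronger hypothesis forces the generic decomposition of any irreducible component of maximal dimension to be trivial.

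First I would verify that both hypotheses of the Proposition are satisfied. Condition~(a), that $q_A$ is weakly non-negative, is assumed outright. For condition~(b), the hypothesis that $q_A(u)\ge 1$ for every $0\le u\le v$ means that the only non-negative vector $z\le v$ with $q_A(z)=0$ is $z=0$; hence condition~(b) holds vacuously (any vector $u$ will do, since the only $z$ we ever need to write as an integral multiple of $u$ is $z=0$).

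Next I would re-examine the two cases of the Proposition's proof under our stronger assumption. Pick an irreducible component $C$ of $\mod_A(v)$ of maximal dimension and let $v=w_1+\cdots +w_s$ be its generic decomposition, with irreducible components $C_i\subseteq \mod_A(w_i)$ whose generic decomposition is itself irreducible, and open subsets $U_i\subseteq C_i$ of modules mutually $\Ext^1$-orthogonal to modules in $U_j$ for $j\ne i$. Each $w_i$ is non-zero and satisfies $0\le w_i\le v$, so the hypothesis yields $q_A(w_i)\ge 1$. Combining this with the identity
$$1=q_A(v)=\sum_{i=1}^s q_A(w_i)+\sum_{i\ne j}\langle w_i,w_j\rangle_A$$
and the estimate $\langle w_i,w_j\rangle_A\ge \dim_k\Hom_A(X_i,X_j)-\dim_k\Ext^1_A(X_i,X_j)\ge 0$ from Theorem~2.3 (applied to generic $X_i\in U_i$, $X_j\in U_j$, for which the $\Ext^1$ term vanishes), every summand on the right is non-negative. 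The bound then forces $s=1$ and $q_A(w_1)=1$; in particular, case~(2) of the Proposition (all $q_A(w_i)=0$) is ruled out, and case~(1) collapses to $v=w_1$.

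Finally, since the generic decomposition in $C$ is $v=w_1$ alone, the generic module in $C$ is indecomposable of dimension vector $v$, yielding an indecomposable $X\in\mod_A(v)$. There is no real obstacle: the whole point is that the hypothesis $q_A(u)\ge 1$ for all $0\le u\le v$ precisely eliminates the possibility of a radical component $w_i$ with $q_A(w_i)=0$ appearing in any generic decomposition of $v$.
\end{proofs}
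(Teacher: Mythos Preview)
Your argument is correct and follows the same strategy as the paper, but you do more work than needed. The paper's proof is a single line: take $u=0$ in condition~(b) of the Proposition; then every integral multiple $ru$ equals $0$, so the conclusion of the Proposition yields an indecomposable realization in $\mod_A(v-r\cdot 0)=\mod_A(v)$ directly. You instead verify~(b) for ``any vector $u$'' and then re-enter the Proposition's proof to force $s=1$. That re-tracing is valid, but it is exactly what the Proposition already packages for you once $u=0$ is chosen; with an arbitrary $u$ you would indeed need the extra argument to see that the realization lands at $v$ rather than at some $v-ru$, which is why the specific choice $u=0$ is the clean move.
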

\begin{proof}   
Condition (b) above is satisfied for $u=0$.
\end{proof}

\begin{coro}
Let $A$ be a triangular algebra whose Tits form is weakly positive and $v$ be any root of $q_A$. Then there exists an indecomposable realization $X \in \mod_A(v)$.
\end{coro}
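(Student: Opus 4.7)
The plan is to deduce this corollary as an immediate consequence of the preceding one. First I would observe that weak positivity of $q_A$ trivially implies weak non-negativity, so the first hypothesis of the previous corollary (that $q_A$ is weakly non-negative) is automatic.

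Next I would verify the pointwise positivity hypothesis. Since $q_A$ is an integer-valued quadratic form on $\Z^{Q_0}$ and is strictly positive on every nonzero vector of $\N^{Q_0}$, every such vector $u$ satisfies $q_A(u)\ge 1$; in particular this holds for every $0\ne u\le v$. Both hypotheses of the previous corollary being met, it delivers an indecomposable $X\in\mod_A(v)$, which is what is to be proved.

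If one prefers a self-contained argument rather than an appeal, one can inspect the proof of the Proposition of (2.4). Pick an irreducible component $C$ of $\mod_A(v)$ of maximal dimension with generic decomposition $v=w_1+\cdots+w_s$. In case (2) of that argument, every $q_A(w_i)=0$, which is impossible under weak positivity because each $w_i$ is a nonzero vector of $\N^{Q_0}$. In case (1) with $s>1$, one derives $q_A(w_j)=0$ for some $j\ne 1$, which is impossible for the same reason. Hence $s=1$ and the generic module of $C$ is indecomposable with $\udim=v$. There is no genuine obstacle in this corollary: all the substantive work has been done in Theorem (2.3) and the Proposition of (2.4), and what remains is the observation that weak positivity both rules out vanishing of $q_A$ on nonzero summands and provides the integer lower bound needed to invoke the preceding corollary.
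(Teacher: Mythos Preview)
Your proposal is correct and matches the paper's intended argument: the paper states this corollary without proof immediately after the preceding one, and your deduction---weak positivity gives weak non-negativity together with $q_A(u)\ge 1$ for every nonzero $0\le u\le v$, so the previous corollary applies---is precisely the omitted step. Your alternative direct inspection of the Proposition in (2.4) is also valid and amounts to the same observation that no nonzero isotropic summand can occur.
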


\subsection{}%2.5
We recall important classes of algebras where roots may be realized:

(a) Representation-finite triangular algebras. Indeed such an algebra $A$ has a weakly positive Tits form.

(b) Let $A=kQ/I$ be a basic finite dimensional $k$-algebra. A module
$_AT$ is called a {\em tilting module\/} if it satisfies:

(T1) $\Ext^2_A(T,-)=0$; (T2) $\Ext^1_A(T,T)=0$; (T3) The number of non-isomorphic indecomposable direct summands of $_AT$ is the rank of the Grothendieck group $K_0(A)$.

In case $A=k\Delta$ is a hereditary algebra and $_AT$ is a tilting
module, $B=\End_A(T)$ is called a {\em tilted algebra\/} of type
$\Delta$. The work of Kerner \cite{20} shows that a tilted algebra $B$ is tame if an only if the Tits form $q_B$ is weakly non-negative. In that case all roots of $q_B$ can be realized as dimension vectors of indecomposable modules.

\begin{propo}
Let $A$ be a triangular algebra accepting a locally maximal omnipresent root $v$ of $q_A$ with exceptional indices $a \ne b$. Then there is an indecomposable $A$-module $Y$ such that $y=\udim Y$ is a positive root of $q_A$ satisfying:

(i) $y \le v$ and $y(a)=1$;

(ii) $q_A(v-y)=0$;

(iii) if $y(b) \ne 0$ then $v=y$.
\end{propo}
\begin{proof}
By (1.3), there is a positive root $y$ of $q_A$ such that there exist a sequence of reflections $\sigma_{i_1},\ldots,\sigma_{i_s}$ satisfying:

(1) $\sigma_{i_s}\ldots \sigma_{i_1}(e_a)=y \le v$;

(2) $\sigma_{i_t}\ldots
\sigma_{i_1}(e_j)= e_j +e_{i_1}+\cdots +e_{i_t}$ for all $1 \le t \le s$;

(3) for every $0 \le u \le y$ we have $q_A(u) \ge 1$;

(4) $q_A(v-y)=0$.

By (2.3), there is a realization of $y$ as desired. In case $y(b) \ge 1$ then $v-y$ is a non-negative vector with $(v-y)(a)=0=(v-y)(b)$ and $q_A(v-y)=0$. Since $q_A^{(a,b)}$ is weakly positive, then $v-y=0$.
\end{proof}

\subsection{}
Let $B$ be an algebra, $\mathcal C$ be a standard component of $\Gamma_B$
and $X$ be an indecomposable module in $\mathcal C$. In \cite{1}, three
{\it admissible operations\/} (ad~1), (ad~2) and (ad~3) were defined
depending on the shape of the support of $\Hom_B(X,-)\vert_{\mathcal C}$
in order to obtain a new algebra $B'$.

\begin{itemize}
\item[(ad~1)] If the support of $\Hom_B(X,-)\vert_{\mathcal C}$ is of the
form 
$$X=X_0\to X_1\to X_2\to \cdots$$
we set $B'=(B\times D)[X\oplus Y_1]$, where $D$ is the full $t\times
t$ lower triangular matrix algebra and $Y_1$ is the indecomposable
projective-injective $D$-module.

\item[(ad~2)] If the support of $\Hom_B(X,-)\vert_{\mathcal C}$ is of the
form 
$$Y_t\leftarrow \cdots \leftarrow Y_1\leftarrow
X=X_0\to X_1\to X_2\to \cdots$$
with $t\ge 1$, so that $X$ is injective, we set $B'=B[X]$.

\item[(ad~3)] If the support of $\Hom_B(X,-)\vert_{\mathcal C}$ is of the
form 
$${\footnotesize
\xymatrix@C6pt@R5pt{
   &Y_1      &\to &Y_2      &\to &\cdots &\to &Y_t      &    &    &
   &\cr
   &\uparrow &    &\uparrow &    &       &    &\uparrow &    &    &
   &\cr
X= &X_0      &\to &X_1      &\to &\cdots &\to &X_{t-1}  &\to &X_t &\to &\cdots
\cr}}$$
with $t\ge 2$, so that $X_{t-1}$ is injective, we set $B'=B[X]$.
\end{itemize}

In each case, the module $X$ and the integer $t$ are called,
respectively, the {\it pivot\/} and the {\it parameter of the
admissible operation}. 

The dual operations are denoted by (ad~1*), (ad~2*) and (ad~3*).

Following \cite{1}, an algebra $A$ is a {\it coil enlargement\/} of
the critical algebra $C$ if there is a sequence of algebras
$C=A_0,A_1,\ldots,A_m=A$ such that for $0\le i<m$, $A_{i+1}$ is
obtained from $A_i$ by an admissible operation with pivot in a stable
tube of $\Gamma_C$ or in a component (coil) of $\Gamma_{A_i}$
obtained from a stable tube of $\Gamma_C$ by means of the admissible
operations done so far. When $A$ is tame, we call $A$ a {\it coil
algebra}. A typical example of a coil algebra is the following
bound quiver algebra given by the quiver 

$${\footnotesize \xymatrix@C20pt@R16pt{&1 \ar[dr]^{\alpha}&&1 &\\&&2 \ar[ru]^{\beta}\ar[rd]_{\delta}\ar[drr]^{\lambda}&&\\\bullet \ar[urr]^{\rho}\ar[drrr]^{\sigma}&1 \ar[ur]_{\gamma}&&1 &\bullet\\&\bullet \ar[rr]_{\nu}&&\bullet \ar[ru]_{\mu}& 
}}$$
\noindent
with relations $\lambda \alpha=0, \, \lambda \gamma=0, \, \beta \rho=0, \,\delta \rho=0$ and $\lambda \rho= \mu \sigma$. Observe that the dimension vector $u$ displayed with support in the critical algebra $C$ satisfies $q_A(u,-)=0$.

\begin{propo}
Let $A$ be a representation-infinite coil algebra, then $A$ does not accept maximal positive roots of $q_A$.
\end{propo}
\begin{proof}
Suppose $v$ is a maximal positive root of $q_A$.
Let $C$ be a convex critical subcategory of $A$ with $z$ a critical vector. Hence $q_C(z)=0$. Let $i$ be a vertex of $Q$, we shall prove that $q_A(z,e_i)=0$. Indeed, if $z(i)>0$ then $q_A(z,e_i)=0$, since otherwise $q_A(z-e_i) <0$. Suppose $z(i)=0$ and there is no arrow between $i$ and vertices in supp $z$ in $Q$, then by the definition of the (ad)-operations defining coils \cite{1}, there are no relations between $i$ and vertices in supp $z$. Hence $q_A(z,e_i)=0$. Finally, suppose $z(i)=0 < z(j)$ and there is an arrow between $i$ and $j$, say $i \to j$. Consider $R$ the restriction of the indecomposable projective $A$-module $P_i$ to $C$ and $R(j) \ne 0$. Since $A$ is a coil algebra, then $R$ is an indecomposable regular $C$-module. Hence $q_A(z,e_i)=q_A(z,\udim \,P_i) -q_A(z,\udim \, R)=z(i) + \langle z, \udim \, P_i\rangle =0$. Therefore $q_A(z,-)=0$, in particular $q_A(z,v)=0$, which implies that $v+z$ is a root, contradicting the maximality of $v$. 
\end{proof}

\subsection{Proof of Theorem 1} %2.5
Let $A=kQ/I$ be a triangular algebra without double arrows and whose Tits form $q_A$ accepts a maximal omnipresent root $v$. Hence $q_A$ is weakly non-negative by (1.8). We shall show that there exists an indecomposable module $X$ such that $v= \udim X$. For this purpose, let $C$ be an irreducible component of $\mod_A (v)$ of maximal dimension and $v=w_1 + \ldots +w_s$ the generic decomposition. Let $C_i$ be irreducible components of $\mod_A (w_i)$ such that the generic decomposition in $C_i$ is irreducible. Moreover we may define open subsets $U_i$ of $C_i$ where each $X_i \in U_i$ is extension-orthogonal to any $X_j \in U_j$, for $j \ne i$. The following inequalities hold, as consequence of the hypothesis and (2.4), for any choice $X_i$ of indecomposable modules in $U_i$, $i= 1,\ldots, s$:
$$1=q_A(v)= \sum\limits_{i=1}^s q_A(w_i)+ \sum\limits_{i \ne j}\langle w_i,w_j \rangle \ge \sum\limits_{i=1}^s q_A(w_i)+ \sum\limits_{i \ne j}\dim_k \Hom_A(X_i,X_j),$$
\noindent  
where all the summands are non-negative. We distinguish two cases.

(a) There is one $w_i$ such that $q_A(w_i)=1$, say $i=1$. Suppose $s> 1$, then $q_A(v,w_2)=0$ and the construction of the root $v+w_2$ yields a contradiction to the maximality of $v$. Therefore $v=w_1$ and $X=X_1$ is the desired realization of $v$.

(b) All of the $w_i$ satisfy $q_A(w_i)=0$. For some $i \ne j$ we have $q_A(w_i,w_j) = \dim_k\Hom_A(X_i,X_j) =1$, for modules $X_i \in U_i$ and $X_j \in U_j$, say $i=1,j=2$ and for all others $i \ne 1,2$ and any $j$ the modules in $U_i$ and $U_j$ are Hom-orthogonal.  
If $s > 2$, we get a contradiction since then $v+w_3$ is a root. Assume that $s=2$. 
In case there $v$ accepts only one exceptional index $a$ then either $q_A(w_1,e_a)=0$ or $q_A(w_2,e_a)=0$ which, again, contradicts the maximality of $v$. Therefore,
by (1.4), there exist two exceptional indices $a,b$ for $v$ such that $q_A(e_a,w_1)=1=q_A(e_b,w_2)$ and $v(a)=1=v(b)$. By (2.5), there is an indecomposable $A$-module $Y$ such that $y=\udim Y$ is a positive root of $q_A$ satisfying:

(i) $y \le v$ and $y(a)=1$;

(ii) $q_A(v-y)=0$;

(iii) if $y(b) \ne 0$ then $v=y$.

Consider the non-negative isotropic vector $w=v-y$. In case $w(b)=0$ then (iii) applies and yields a realization $Y$ of $v$. Hence we may assume that $w(b)=1$. Therefore $1=q_A(v)=q_A(w+y)=1+q_A(w,y)$ and $q_A(w,v)=q_A(w,w+y)=0$. This implies that $q_A(v+w)=1$, again a contradiction to the maximality of $v$. This completes the proof of Theorem 1.

\section{More examples}

\subsection{} %3.1
Consider the algebras given by the following quiver with relations:
$$\footnotesize{ B_{\lambda \mu}: \xymatrix@C12pt@R9pt{ & \bullet \ar[rd] & &\bullet &\\
             &   &\bullet \ar[ru]\ar[rrd]^{\beta_1} &  & \\
           \bullet \ar[rru]^{\alpha_1}\ar[rd]^{\alpha_2} & & & & \bullet \\ 
             &\bullet  \ar[rr]^{\gamma}& & \bullet \ar[ru]^{\beta_2}&\\
                 &&   \lambda \beta_1 \alpha_1 +\mu \beta_2 \gamma \alpha_2 =0           &&
}}
$$
and consider $B_{11}, B_{10}$ and $B_{01}$. The three algebras have the same weakly positive form 
$q$ which has a maximal omnipresent root $v$. While there is a one-to-one correspondence between the isoclasses of indecomposable $B_{11}$-modules and the roots of $q$ (in fact, $B_{11}$ is a representation-finite tilted algebra), there are infinitely many indecomposable $B_{01}$-modules $Y$ with the same dimension vector $y$:

 $$\footnotesize{\xymatrix@R0pt@C2pt{&&1&&1& \\& &&1&& \\v:&1&&&&1 \\ &&1&&1&}}
\hskip2truecm
\footnotesize{\xymatrix@R0pt@C2pt{&&1&&1& \\ &&&2&& \\y:&1&&&&1 \\ &&0&&0&}}$$ 
\smallskip

Observe that $B_{01}$ is wild while the algebra $B_{10}$ is tame not of polynomial growth.

\subsection{} %3.3
By results in \cite{26} the {\em number of parameters} $p(A)$, defined as the number of convex critical subcategories of $A$, of a tame algebra $A$ having a sincere indecomposable module is at most $2$. Clearly, if $p(A)=0$, then $A$ is representation-finite and if $p(A)=1$ then $A$ is a finite enlargement or coenlargement of a representation-infinite domestic tilted convex subcategory of $A$ (for definitions see \cite{Ri0}). In case $p(A)=2$ then $A$ is the glueing of two representation-infinite domestic tilted algebras. We recall that Bongartz classified the families of representation-finite algebras $A$ with the above properties and $n \ge 13$. Dr\"axler \cite{10} built the small cases ($n \le 12$) with the help of a computer program. The algebras with $p(A)=2$ and $n \ge 20$ were classified by de la Pe\~na in \cite{27}. 

Consider the algebras given by quivers with relations:
$$\footnotesize{A_1:\xymatrix@C5pt@R5pt{
&&&&\bulito \ar[rd]&&&&&\\
&\bulito\ar[r]&\bulito\ar[r]&\bulito\ar[ru]\ar[rd]\ar@{--}[rr]&&\bulito \ar[r]&\ldots&\bulito\ar[rdd]&&\\
&&&&\bulito \ar[ru]&&&&&\\
\bulito\ar[ruu]\ar[rrrrd]\ar@{--}[rrrrrrrr]&&&&&&&&o&\\
&&&&\bulito\ar[rrrru]&&&&&&\\
&&&&\bulito\ar[u]&&&&&\\
&&&&\vdots&&&&&\\
&&&&\bulito\ar[u]&o\ar[l]\ar@{--}[rrruuuu]&&&&\\
&&&&\bulito \ar[u]&&&&&
}
\hskip.5truecm
\xymatrix@C5pt@R5pt{
&&&&1&&&&&\\
&1&1&1&&1&\ldots&1&&\\
1&&&&1&&&&1&\\
&&&&2&&&&&&\\
&&&&2&&&&&\\
&&&&\vdots&&&&&\\
&&&&2&1&&&&\\
&&&&1&&&&&
}
}$$
$$ \footnotesize{A_2: \xymatrix@C5pt@R5pt{ 
     &&\bullet \ar[r] &\bullet \ar[r] &\bullet \ar[r]& \bullet\ar[rd]\ar[r]& \bullet \\
  \bullet \ar[r]&\bullet \ar[ru]\ar[rd]\ar@{--}[rrrrr] & & & &&\bullet  \\ 
     & & \bullet \ar[rr]&   & {\rm o} \ar[rru]  & &   
}
\hskip.5cm
 \xymatrix@C8pt@R6pt{ 
     &&2 &2 &2& 2& 1 \\
 v: 1&2 & & & &&1  \\ 
     & & 2&   & 2  & &   
}}
$$
The algebra $A_1$ represents in fact a family of algebras parametrized by the number of vertices $n$. The algebras in the family are tilted of type ${\tilde D}_n$ with a preprojective component containing a sincere module with the indicated maximal omnipresent root $v$ of $q_{A_1}$, that is, $p(A_1)=1$. Observe that $v$ has two exceptional indeces marked by ${\rm o}$. For the algebra $A_2$, the Tits form $q_{A_2}$ accepts a maximal root $v$ with a unique exceptional index marked by ${\rm o}$. This algebra is tame tilted and contains two tame concealed subcategories of types ${\tilde E}_7$ and ${\tilde E}_8$, that is, $p(A_2)=2$. 

\section{Tilted algebras}

\subsection{} We start recalling from \cite{29a}, \cite{30} and \cite{7} the following useful version of the {\em Splitting Lemma}.
We say that an indecomposable $A$-module $X$ is {\em extremal} if ${\rm supp }X=\{ i \in Q_0 : X(i) \ne 0 \}$ contains all sinks and sources of $Q$. 

\begin{lemma}
\label{splittinglemma}%
%\begin{trivlist}
Let $A$ be a triangular algebra and $B=B_0,$ $B_1, \ldots, B_s=A$
a family of convex subcategories of $A$ such that, for each $0\le
i\le s$, $B_{i+1}=B_i[M_i]$ or $B_{i+1}=[M_i]B_i$ for some
indecomposable $B_i$-module $M_i$. Assume that the category $\ind_B$ 
of indecomposable $B$-modules admits a splitting
$\ind_B={\mathcal P} \lor {\mathcal J}$, where ${\mathcal P}$ and
${\mathcal J}$ are full subcategories of $\ind_B$ satisfying the
following conditions:

\begin{itemize}
%\begin{arabSlist}
% \begin{enumerate}[(S1)]
\item[ {\rm (S1)}]
$\Hom_B({\mathcal J},{\mathcal P})=0$;

\item[ {\rm (S2)}]
 for each $i$ such that $B_{i+1}=B_i[M_i]$, the
restriction $M_{i}\big|_B$ belongs to the additive category
 add $({\mathcal J})$;

\item[ {\rm (S3)}]
 for each $i$ such that $B_{i+1}=[M_i]B_i$,
the restriction $M_{i}\big|_B$ belongs to the additive category
 add $({\mathcal P})$;

\item[ {\rm (S4)}]
 there is an index $j$ with $B_{j+1}=B_j[M_j]$ and $M_j\in {\mathcal J}$.
%\end{arabSlist}
 %\end{enumerate}
\end{itemize}

In case $A$ accepts an indecomposable extremal module, then $A$ is a tilted algebra.
%}\end{trivlist}\smallskip
\end{lemma}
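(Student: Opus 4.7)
\begin{proofs}
The plan is to reduce the statement to the standard characterization of tilted algebras via complete slices (cf. \cite{32}), by constructing a sincere complete slice in a connecting component of $\Gamma_A$. The construction will be carried out in three stages: first extend the splitting from $B$ to $A$ along the chain of one-point (co)extensions; second, exhibit a candidate section at the interface of the extended splitting; third, use the extremal indecomposable module to ensure that this section is complete and sincere.

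First I would extend the splitting inductively. Set $\mathcal{P}_0 = \mathcal{P}$, $\mathcal{J}_0 = \mathcal{J}$ and, for $0 \le i < s$, define $\mathcal{P}_{i+1}$ and $\mathcal{J}_{i+1}$ from $\mathcal{P}_i$, $\mathcal{J}_i$ as follows. If $B_{i+1} = B_i[M_i]$, then by (S2) we have $M_i\big|_{B_i} \in \mathrm{add}(\mathcal{J}_i)$; the standard description of $\ind_{B_{i+1}}$ in terms of $\ind_{B_i}$ and the new projective $P_*$ allows one to place $P_*$ and the new indecomposables supported at $*$ into $\mathcal{J}_{i+1}$, keeping $\mathcal{P}_{i+1} = \mathcal{P}_i$. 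The key point is that $\Hom_{B_{i+1}}(\mathcal{J}_{i+1}, \mathcal{P}_{i+1}) = 0$: a morphism from a new indecomposable with nonzero value at $*$ into any $N \in \mathcal{P}_i$ factors through $M_i\big|_{B_i} \in \mathrm{add}(\mathcal{J}_i)$, forcing it to vanish by (S1). The case $B_{i+1} = [M_i]B_i$ is handled dually using (S3). After $s$ steps one obtains the desired splitting $\ind_A = \mathcal{P}' \vee \mathcal{J}'$ with $\Hom_A(\mathcal{J}', \mathcal{P}') = 0$.

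Next I would identify a section $\mathcal{S}$ in $\Gamma_A$ at the interface of $\mathcal{P}'$ and $\mathcal{J}'$. By (S4), at least one one-point extension step uses a pivot $M_j \in \mathcal{J}$, which introduces a new indecomposable projective into $\mathcal{J}'$; together with condition (S1) this guarantees that $\mathcal{P}'$ and $\mathcal{J}'$ are both nonempty and that the irreducible morphisms crossing between them form a finite set whose sources constitute the candidate section $\mathcal{S}$. The tracking through the (co)extension steps shows that $\mathcal{S}$ is contained in a single connecting component $\mathcal{C}$ of $\Gamma_A$, is closed under predecessors within $\mathcal{P}' \cup \mathcal{S}$ and successors within $\mathcal{J}' \cup \mathcal{S}$, and has the combinatorics of a section (no two vertices on the same $\tau_A$-orbit, convex in $\mathcal{C}$).

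Finally, to conclude that $A$ is tilted, I would invoke the assumed existence of an extremal indecomposable $A$-module $X$. Since $\mathrm{supp}\,X$ contains all sinks and sources of $Q$, every simple projective and every simple injective of $A$ appears as a composition factor of $X$. Because simple projectives lie in the predecessor closure of $\mathcal{S}$ and simple injectives in its successor closure (this is exactly what the splitting $\ind_A = \mathcal{P}' \vee \mathcal{J}'$ encodes), the module $X$ must lie in $\mathcal{C}$ and is sincere on the section; standard slice arguments then force $\mathcal{S}$ itself to be sincere and complete. By the Liu--Skowro\'nski criterion \cite{32}, the existence of a sincere complete slice in a component of $\Gamma_A$ implies that $A$ is a tilted algebra of the type determined by the underlying quiver of $\mathcal{S}$.

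The main obstacle, I expect, is the inductive extension in the first paragraph: one must verify not only that the Hom-vanishing (S1) is preserved at each step but also that the new indecomposables introduced by each one-point (co)extension distribute unambiguously into $\mathcal{P}_{i+1}$ or $\mathcal{J}_{i+1}$, with no indecomposable straddling the two classes. This requires a careful analysis, case by case, of the shape of the support of $\Hom_{B_i}(M_i,-)$ and of the new AR-sequences introduced, which is precisely where the admissible-operation combinatorics used in \cite{29a}, \cite{30} and \cite{7} enter.
\end{proofs}
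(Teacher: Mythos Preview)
The paper does not prove this lemma; it is stated in \S4.1 as a result recalled from \cite{29a}, \cite{30}, and \cite{7}, so there is no in-paper argument to compare against. Your three-stage plan---propagate the splitting along the chain of one-point (co)extensions, locate a section at the interface, then use the extremal module to make it a complete slice---is the architecture used in those references.

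That said, two of your steps are genuinely underspecified. In the third paragraph, the claim that the extremal module $X$ ``must lie in $\mathcal{C}$'' does not follow from the reasoning you give: the fact that $\supp X$ contains all sinks and sources constrains the composition factors of $X$, not the Auslander--Reiten component it belongs to. In the cited proofs extremality is used differently: since every new extension vertex is a source of $Q$ and every new coextension vertex is a sink, an extremal $X$ is forced to have nonzero value at at least one vertex created by an (S4)-type step, and this is what ties $X$ to the interface of the extended splitting and makes the resulting slice faithful. You do not first place $X$ in $\mathcal{C}$ and then argue sincerity; rather, the interaction of $X$ with both $\mathcal{P}'$ and $\mathcal{J}'$ is what certifies the slice.

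Your second paragraph is also too quick: declaring $\mathcal{S}$ to be ``the sources of irreducible morphisms crossing from $\mathcal{P}'$ to $\mathcal{J}'$'' does not automatically yield a section. One must verify that $\mathcal{S}$ meets each $\tau_A$-orbit exactly once, is convex, and lies in a single component, and this uses the iterated one-point (co)extension structure (in particular the behaviour of the new projective/injective and of $\tau$ across each step), not merely the Hom-vanishing (S1). You correctly flag the inductive extension as the technical crux, but the section construction is where the remaining work actually lies.
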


\subsection{Proof of Theorem 2}%4.2

Let $A$ be a strongly simply connected algebra and assume that $v$ is a maximal omnipresent root of $q_A$. Hence $q_A$ is weakly non-negative by Theorem 0 and \cite{7} implies that $A$ is a tame algebra. Moreover, part (a) of Theorem 1 implies the existence of an indecomposable module $X$ with $\udim \, X= v$. In particular, $X$ is an {\em extremal} module. We distinguish two cases:

(a) $A$ is representation-finite. Since $A$ is strongly simply connected then $\Gamma_A$ is a preprojective component. Since $A$ is sincere, then $A$ is tilted.

(b) $A$ is representation-infinite. Let $C$ be a convex critical subcategory of $A$ and $z$ be a positive generator of rad $q_C$ as a vector in $\Z^{Q_0}$. Consider $B$ be a maximal convex coil extension of $C$. Let $ind_B= {\mathcal P}\lor {\mathcal J}$ be a splitting of $ind_B$ such that ${\mathcal J}$ is the preinjective component of the Auslander-Reiten quiver of $B$. Consider any sequence $B=B_0,$ $B_1, \ldots, B_s=A$ of convex subcategories of $A$ such that, for each $0\le i\le s$, $B_{i+1}=B_i[M_i]$ or $B_{i+1}=[M_i]B_i$ for some indecomposable $B_i$-module $M_i$. 

We shall prove that we get a splitting situation as in the above Lemma. Indeed, (S1) is satisfied. For (S2), consider $B_{i+1}=B_i[M_i]$. Consider a decomposition $M_i|_B = N_+ \oplus N_0 \oplus N_-$ such that $N_+$ lies in the additive closure of the preprojective component of $B$ (resp. $N_0$ in the additive closure of the coils of $B$; $N_-$ in the additive closure of ${\mathcal J}$). In case $N_+ \ne 0$, then $A$ has as quotient $B[N_+]$ which is of wild representation type, \cite{29a}. In case $N_0 \ne 0$ then $B[N_0]$ is a coil algebra, contradicting the maximality of $B$. Hence $M_i|_B$ belongs to the additive closure of ${\mathcal J}$. For (S3), consider $B_{i+1}=[M_i]B_i$ and proceed as in (S2) to show that $M_i|_B$ belongs to the additive closure of ${\mathcal P}$. Finally if (S4), or its dual, is not satisfied then $A=B$ is a coil extension of $C$. Proposition (2.6) shows that $A$ does not accept a maximal omnipresent root which contradicts the existence of $v$.

Since $X$ is an indecomposable extremal module, then the Spliting Lemma implies that $A$ is tilted.

\subsection{} %4.3
Let $A$ be a strongly simply connected algebra whose Tits form $q_A:\Z^n \to \Z$ accepts a maximal omnipresent root $v$. Then $A$ is a tame tilted algebra with a directing component ${\mathcal C}$ and an indecomposable sincere module $X$ such that $\udim X=v$. 

First we observe that $X \in {\mathcal C}$, in particular pdim$_A X \le 1$ and idim$_A X \le 1$. Indeed, if $X \notin {\mathcal C}$, we may assume that there are morphism $\Hom_A(Y,X) \ne 0$ for some $Y \in {\mathcal C}$ and therefore ${\mathcal C}$ does not contain injective modules. Since $A$ is tame then $A$ is tilted of Euclidean type and ${\mathcal C}$ is a preprojective component, see \cite{31}. Therefore $A$ does not accept a maximal omnipresent root, a contradiction.

Consider a {\em slice} ${\mathcal S}$ of ${\mathcal C}$ containing $X$. By a result of Happel \cite{17}, the Hochschild cohomology $H^1({\mathcal S})=H^1(A)=0$, since $A$ is strongly simply connected. Therefore ${\mathcal S}$ is a tree and we may assume that $X$ is a unique source in ${\mathcal S}$ (since all indecomposable injective modules are successors of $X$ in $\Gamma_A$. Consider the number $t({\mathcal S})$ of terminal vertices of the tree ${\mathcal S}$ and $s(X)$ the number of neighbours of $X$ in ${\mathcal S}$. 

If there are no projective modules in ${\mathcal C}$, then the slice ${\mathcal S}$ is of Euclidean type \cite{25} and therefore there are infinitely many omnipresent roots for $q_A$, a contradiction. Therefore there is a last projective $P_s$ in the order of paths in ${\mathcal C}$, we shall denote $R_0=$ rad $P_s$, where $s$ is a source in the quiver of $A$, and we may assume that the algebra $B=A/(s)$ is connected and the $B$-module $R_0$ is indecomposable (if not so, we consider the dual algebra $A^{op}$). In particular $A=B[R_0]$, where $B$ is a tilted algebra with a directing component ${\mathcal C}'$ accepting a slice ${\mathcal S}'={\mathcal S} \setminus \{ P_s^{\tau} \}$ with $t({\mathcal S}')= t({\mathcal S})-1$ if $s(R_0) >1$ or $t({\mathcal S}')=t({\mathcal S})$ if $s(R_0)=1$. Since $X$ is sincere and ${\mathcal C}'$ is directing, no module of the slice ${\mathcal S}'$ is injective with the possible exception of $R_0$ itself (in that case, $P_s$ is projective-injective and therefore the unique sincere $A$-module). Moreover, since $A$ is tame, the algebra $B$ is tame and the vector space category ${\mathcal U}(\Hom_B(R_0, \mod_B))$ is tame. In particular, we get (see \cite{Ri0}):

(a) for every indecomposable $B$-module $Y$ we have $\dim_k \Hom_B(R_0,Y) \le 2$;

(b) if for some indecomposable $B$-module $Y$ we have $\dim_k \Hom_B(R_0,Y)= 2$, then for any indecomposable $B$-module $Z$ we have either $\Hom_B(Y,Z)\ne 0$ or $\Hom_B(Z,Y)\ne 0$;

(c) if for every indecomposable $B$-module $Y$ we have $\dim_k \Hom_B(R_0,Y) \le 1$ then the poset $\Hom_B(R_0,{\mathcal C}')$ does not contain a hypercritical subposet, that is, a poset whose Hasse diagram is one of the following:

$$\xymatrix@R4pt@C2pt{
& & & & & & & & & & \bulito\ar[d] \\
{\mathcal N}_1=(1,1,1,1,1): & \bulito &\bulito &\bulito &\bulito &\bulito &
\hskip1truecm {\mathcal N}_2=(1,1,1,2): & \bulito &\bulito &\bulito &\bulito 
}$$
$$\xymatrix@R4pt@C2pt{
& & & & & & & & &  \bulito\ar[d] \\
& & & &\bulito\ar[d] & & & &\bulito \ar[d] &  \bulito\ar[d] \\
& &\bulito \ar[d] &\bulito\ar[d] &\bulito\ar[d] & & & &\bulito\ar[d] &  \bulito\ar[d] \\
&{\mathcal N}_3=(2,2,3): & \bulito &\bulito &\bulito &
\hskip1.5truecm {\mathcal N}_4=(1,3,4): & &\bulito &\bulito &\bulito 
}$$
$$\xymatrix@R4pt@C2pt{
& & & & & & & & &  \bulito\ar[d] \\
& & & & \bulito\ar[d]& & & & &  \bulito\ar[d] \\
& & & &\bulito\ar[d] & & & & &  \bulito\ar[d] \\
& & & &\bulito\ar[d] & & & & &  \bulito\ar[d] \\
& &\bulito \ar[d]\ar[rd] &\bulito\ar[d] &\bulito\ar[d] & & & &\bulito\ar[d] &  \bulito\ar[d] \\
&{\mathcal N}_5=(N,5): & \bulito &\bulito &\bulito &
\hskip1.5truecm {\mathcal N}_6=(1,2,6): & &\bulito &\bulito &\bulito 
}$$

\begin{propo}
With the notation above the following holds:

(i) ${\mathcal S}$ has at most three branching points and $t({\mathcal S}) \le 5$ terminal vertices. 

(ii) ${\mathcal S}'$ has at most two branching points and  $t({\mathcal S}') \le 4$. If $t({\mathcal S}')=4$, then $X$ is projective-injective (and $X$ is the unique sincere indecomposable module).

(iii) If $X$ is not projective-injective, then either $s(X)\ge 3$ or ${\mathcal S} \setminus \{ X^{\tau}\}$ has two non-linear components.

(iv) If $s(X)=2$ then $v$ has two exceptional vertices.

(v) If $s(X)=3$ then $v$ has a unique exceptional index.  

(vi) If $s(X)=4$ then $X$ is the unique indecomposable sincere $A$-module.
\end{propo}
\begin{proof}
(i) and (ii). Assume that ${\mathcal S}$ has $3$ or more branching points. Then the quiver ${\mathcal S}$ is not Euclidean and $R_0$ lies in ${\mathcal C}'$ which is a connecting component of $\Gamma_B$ accepting a slice ${\mathcal S}'$ with at least two branching points, then $t({\mathcal S}') \ge 4$. If $t({\mathcal S}) \ge 5$ then also $t({\mathcal S}') \ge 4$. Hence we shall assume that
$t({\mathcal S}') \ge 4$ which implies that in ${\mathcal C}'$ we get one of the following situations as subposets of $\Hom_B(R_0,{\mathcal S}')$:
$$\footnotesize{\xymatrix@C.1pt@R.5pt{  
&1 & & & & & & &\\
R_0:1 \ar[ru] \ar[rd] & & & & & & & & \\
 & 1 \ar@{..}[rd] & & & & & & & \\
 & &    1 \ar[rd] & & &  & & & \\
 & & &  Y_1 \ar[rd]\ar[r]& 1 & & & &\\
 & & &  &  1 \ar@{..}[rd] & & & &\\
 & & &  &   & 1 \ar[rd] & & &\\
 & & &  &   &  &Y_2 \ar[rd]\ar[r] &1&\\
 & & &  &   &  & &1 &
}
\xymatrix@C.1pt@R.5pt{ 
 & & &  &  & 1 &\\
 & & &    &Y_1 \ar[ru]\ar[r] &1\\
 & & &   1 \ar[ru] & &\\
 & & 1 \ar@{..}[ru] & & & &\\
 &1\ar[ru] & & & & & & & & &\\
R_0:1 \ar[ru] \ar[rd] & & & & & & & & & &\\
 & 1 \ar@{..}[rd] & & & & & & & &\\
 & &   Y_2\ar[rd]\ar[r]& 1 & & & & \\
 & &    &1  & &  & & & &\\
}
\xymatrix@C.1pt@R.5pt{ 
 & & &  &  & 1 \\
 & & &    &Y_1 \ar[ru]\ar[r] &1\\
 & & &   1 \ar[ru] & &\\
 & & 1 \ar@{..}[ru] & & & \\
 &1\ar[ru] & & & & \\
R_0:1 \ar[ru] \ar[rd]\ar[r] & 1& & & &\\
 & 1 &  & & & 
}
\xymatrix@C.2pt@R.5pt{
&1 \\
&1 \\
R_0 \ar[ru]\ar[ruu]\ar[rd]\ar[rdd]&\\
&1\\
&1
}}$$

We conclude that $R_0$ is injective, since otherwise we get either an indecomposable module $Y$
with $\dim_k \Hom_B(R_0,Y) \ge 3$ (last case) or two non-comparable indecomposable modules $Y,Z$ with $\dim_k \Hom_B(R_0,Y)=2$. Then $P_s$ is projective-injective and $R_0$ is a sincere $B$-module. Since the poset $\Hom_B(R_0, {\mathcal S}')$ is tame, we get that ${\mathcal S}'$ is completely depicted in the above pictures. Hence there are at most $3$ branching point in ${\mathcal S}$ and at most $5$ terminal points. Moreover, precesily $t({\mathcal S}')=4$.

(iii) Assume that $s(X) \le 2$ and ${\mathcal S}\setminus \{ X \}$ has a linear component of type $\A_m$. We shall prove that $X$ is projective-injective by induction on $m$.

Let $X=X_{m,1} \to X_{m-1,2} \to \ldots \to X_{2,m-1}\to X_{1,m}$ be the chain of maps in ${\mathcal S}$ corresponding to the linear component in ${\mathcal S}\setminus \{ X \}$. Suppose first that none of the $X_{i,m-i+1}$ is injective. Then ${\mathcal C}$ contains the following modules and irreducible morphisms:  

$$\footnotesize{\xymatrix@C1pt@R1pt{  
X_{1,1}\ar[rd]\ar@{--}[rr]&&X_{1,2}\ar[rd] \ar@{--}[rrrr] & & &&X_{1,m}\ar[rd]\ar@{--}[rr] & &X_{1,m}\ar[rd]\ar@{--}[rr]&&X_{1,m+1}\\
&X_{2,1} \ar[ru] \ar[rd] \ar@{--}[rr]& &X_{2,2}\ar@{--}[rrrr] & &  & &   X_{2,m-1}\ar[ru]\ar[rd]\ar@{--}[rr] & &X_{2,m}\ar[ru]\\
 && X_{3,1}\ar[ru] \ar@{..}[rd]\ar@{--}[rrrr]  & & & &  X_{3,m-2}\ar[ru]\ar@{..}[rd]\ar@{--}[rr]&&X_{3,m-1}\ar[ru]\\
 && &   X_{m-1,1} \ar[rd]\ar@{--}[rr]  & &  X_{m-1,2}\ar[rd]\ar@{..}[ru]\ar@{--}[rr]& &X_{m-1,3}\ar@{..}[ru]&\\
 && & &  X \ar[rd] \ar[ru]\ar[rd]\ar@{--}[rr]&  &X_{m,2}\ar[ru]&&\\
 && &\bulito\ar[ru]\ar@{--}[rr] &  &  X_{m+1,1}\ar[ru]& & & & &&\\
}}$$

Since every irreducible map is either mono or onto, a simple dimension argument yields that all descending maps $X_{i,j} \to X_{i-1,j}$ are mono and all ascending maps $X_{i,j} \to X_{i+1,j+1}$are onto. In particular, $X \to X_{m-1,2}$ is mono, contradicting the maximality of the root $v$.

Therefore one of $X_{i,m-i+1}$ is injective. Suppose that $i>1$, then $X_{i-1,m-i}$ is also injective since otherwise for the injective $I$ in the orbit of $X_{i-1,m-i}$ we have $\Hom_A(X,I)=0$, a contradiction to the sincerity of $X$. We get that all $X_{j,m-j+1}$, for $j \le i$ are injective.  In particular $S=X_{1,m}$ is simple injective and since $\dim_k S(a) + \dim_k S(b)= q_A(v, \udim S)= \dim_k \Hom_A(X,S)=1$ we may assume that $S=S_a$ is the simple associated to the index $a$. For the algebra $C=A/Ae_aA$ we get the one-point coextension$A=[X_{1,m}]C$ and therefore $C$ is tilted of type ${\mathcal S}\setminus \{ S_a \}$. The algebra $C$ accepts an omnipresent root $v-e_a$ which is realized by the module $X_{m-1,1}$. By induction hypothesis $X_{m-1,1}$ is projective-injective as a $C$-module and therefore $X$ is projective-injective as an $A$-module.

(iv) Assume that $X$ has two neighbors in ${\mathcal S}$ and $v= \udim X$ has only one exceptional index $a$. Since $v(a)=2$ then $X$ is not projective. Consider the Auslander-Reiten sequence $0 \to \tau X \to Y_1 \oplus Y_2 \to X \to 0$ where $Y_1,Y_2$ are indecomposable modules. Observe that for $i=1,2$,
$$\dim_k Y_i(a)= q_A(v,\udim Y_i)= \dim_k \Hom_A (Y_i,X)- \dim_k\Ext_A^1(X,Y_i)=1,$$
\noindent therefore 
$$2= v(a)=\dim_k \Hom_A(P_a,X)\le \dim_k Y_1(a) +\dim_k Y_2(a)$$ 
\noindent and 
$\dim_k \Hom_A (P_a, \tau X)=0,$ therefore
$$1= \dim_k P_a (a)=q_A(v, \udim P_a)= \dim_k \Hom_A(P_a,X)- \dim_k \Ext^1_A(X,P_a)=v(a)=2.$$
\noindent
A contradiction completing this case.

(v) The argument is similar to (iii) and is presented in \cite{22}.

(vi) If $s(X)=4$ then, as in (i), $X$ is injective, say with socle $S$. Let $X/S = \oplus_{i=1}^4 Y_i$, where the $Y_i$ are indecomposable not projective (since $X$ is sincere) $A$-modules. The dual argument of (i) yields that $X$ is projective. Then $X$ is the unique sincere $A$-module
\end{proof}

\subsection{Proof of Theorem 3} %4.4
Let $A$ be an algebra whose Tits form $q_A:\Z^n \to \Z$ accepts a maximal omnipresent root $v$ with two exceptional indices $a$ and $b$. Then $A$ is a tame tilted algebra with a directing component ${\mathcal C}$ containing an indecomposable sincere module $X$ such that $\udim X=v$. We shall prove that $X$ is (up to isomorphism) the unique indecomposable sincere $A$-module. For this purpose we may assume that $X$ is not projective-injective.

Let $P_s$ be a last projective in ${\mathcal C}$ and $R_0$ its indecomposable radical as in the last paragraph. Hence $A=B[R_0]$ for a tilted strongly simply connected algebra $B$. Let ${\mathcal S}$ (resp. ${\mathcal S}'$) the slice in ${\mathcal C}$ (resp. ${\mathcal C}'$) whose unique source is $X$ (resp. $R_0$). Let $Y$ be an indecomposable sincere $A$-module non-isomorphic to $X$. We shall get a contradiction.

Assume that $a,b$ are exceptional indices of $v= \udim X$. By (4.3) we may suppose that $X$ has two neighbors in ${\mathcal S}$ (that is, $s(X)=2$) and $t({\mathcal S}')=3$. Then $B$ is tilted of type 
$$\footnotesize{\xymatrix{a_1 &a_2\ar[l]\ar@{--}[r]&a_{r-1}&a_r\ar[l]\ar[r]&a_{r+1}\ar@{--}[r]&a_{r+s} \ar[d]\ar[r]&b_1\ar@{--}[r]&b_{t-1}\ar[r]&b_t\\
                       & &&                    &&c_1\ar@{--}[d]&&& \\
                        &&&                    & &c_p & & & 
}}$$

\noindent where $a_r$ is the orbit of $R_0$. Let $Y_i$ (resp, $Z_j$, $Z'_r$) be the indecomposable modules corresponding to the vertex $a_i$ (resp. $b_j$, $c_r$) in ${\mathcal S}'$. First we shall observe that $r=2$: indeed, by (4.3), $r=1$ implies $X$ is projective-injective, a situation we have discarded. Hence $r\ge 2$. In case none of the $Y_i$ is injective, then $Y_1$ and $Z=\tau^{-1}Y_{r+s}$ are non-path comparable modules with 
$\dim_k\Hom_B(R_0,Z)=2$ and $\dim_k \Hom_B(R_0,Y_1)=1$ which implies that $A$ is a wild algebra. Therefore we get a module $Y_j$ which is injective, by the remarks at (4.3), and thus $X$ is projective-injective again. A contradiction showing that $r=2$.  

We show that one of the modules $Y_i$ is injective. Indeed, assume otherwise that none of the $Y_i$ is injective. In case $t>1$ or $p>1$, then as above, $A$ is wild. Therefore $t=1=p$ and then $A$ is tilted of Euclidean type ${\tilde \D}_{n-1}$, a situation which is also discarded. Suppose that $Y_j$ is injective. By \cite{31}, section 6.3, the injective $A$-module at the vertex $j$ is of the form ${\overline Y}_j =(Y_j,k, id)$ meaning that the restriction of the module to $B$ is $Y_j$, moreover ${\overline Y}_j= \tau^{-j}Y_i$ for $i=1$ or $2$, depending on the parity of $j$. We get the following structure for ${\mathcal C}$

$$\footnotesize{\xymatrix@C1pt@R1pt{  
&\bulito \ar@{--}[rrrrrrrrr] & & & & & & & & &Z'_1\ar@{--}[rrr]&&& {\overline Y}_j\\
R_0 \ar[ru] \ar[rd] \ar[r]&P_s & & & & & & & &  Z'_2\ar[ru]\ar[r] & Z''\ar@{--}[rr]&&\bulito\ar[ru]\\
 & \bulito \ar@{..}[rd] & & & & & & & Z'_3\ar[ru]&\\
 & &    \bulito \ar[rd]\ar@{--}[rrrrr] & & &  & &  Z'_{r-1}\ar@{..}[ru]& &&\\
 & & &  \bulito \ar[rd]\ar@{--}[rrr]& & & X \ar[ru]\ar[rd]&  &&&\\
 & & &  &  \bulito \ar[rd] \ar@{--}[rrr]& & &Z_{r+1}\ar[rd] & &&\\
 & & &  &  &\bulito \ar@{..}[rd] & & &Z_{r+2} \ar@{..}[rd] &&\\
 & & &  &  & & Y_j  \ar[rd]& & &Z_j \ar[rd]&\\
 & & & & & & & \bulito\ar@{..}[rd]&&&Z_{j+1}\ar@{..}[rd]\\
 & & &  &  & & & & \bulito \ar[rd]\ar[r] &\bulito& &Z_{r+s}\ar[rd]\ar[r] & \bulito\\
 & & &  &  & &  && &\bulito \ar@{--}[rrr] & & &  \bulito
}}$$

We want to consider the position of $Y$ at the above picture. First calculate 
$$2 \le \dim_k Y(a) + \dim_k Y(b)= q_A(v,\udim Y) \le \dim_k \Hom_A(X,Y)+ \dim_k \Hom_A(Y,X). $$
Then we may suppose that $\Hom_A(X,Y) \ne 0$ and therefore $\Hom_A(Y,X)=0$. 

Observe that 
$$\dim_k Z_{r+1}(a)+ \dim_k Z_{r+1}(b)= q_A(v, \udim Z_{r+1})= \dim_k\Hom_A(X,Z_{r+1})=1,$$ 
and we may assume that $\dim_k Z_{r+1}(a)=1$ and $\dim_k Z_{r+1}(b)=0$. 
Similarly, we get $\dim_k Z_i(a)=1$ and $\dim_k Z_i(b)=0$, for all $i=r+1,\ldots,r+s$. 
Moreover, $\dim_k Z'_j(a)=0$ and $\dim_k Z'_j(b)=1$, for all $j=1,\ldots,r-1$ and $Z''(a)=0$. 
Since the function $\dim_k \Hom_A(P_a,?)$ is additive on Auslander-Reiten sequences, we get that $Z(a)=0$ at the modules of the form $Z=\tau^{-p}Y_i$ for $1 \le i \le r-1$ and $0 \le p \le i$. 

Since $Y$ is sincere, then $\dim_k \Hom_A(Y,{\overline Y}_j)\ne 0$ and $Y(a) \ne 0$. 
Therefore, 
$Y= \tau^{-p}Y_i$ for some $r \le i \le j$ and some $0 \le p \le i$. Another argument using the additivity of $\dim_k \Hom_A(P_a,?)$ implies that $Y=Z_i$ for some $r+1 \le i \le r+s$. But then $Y(b)=0$, contradicting the sincerity of $Y$. This completes the proof of Theorem 3.

\end{document}